\numberwithin{equation}{section}
\newtheorem{theorem}{Theorem}[section]
\newtheorem{proposition}[theorem]{Proposition}
\newtheorem{lemma}[theorem]{Lemma}
\newtheorem{corollary}[theorem]{Corollary}
\theoremstyle{definition}
\newtheorem{definition}[theorem]{Definition}
\newtheorem{notation}[theorem]{Notation}
\newtheorem{construction}[theorem]{Construction}
\newtheorem{assumption}[theorem]{Assumption}
\theoremstyle{remark}
\newtheorem{remark}[theorem]{Remark}
\newtheorem{example}[theorem]{Example}
\DeclareMathOperator{\spec}{Spec}
\DeclareMathOperator{\spf}{Spf}
\DeclareMathOperator{\Loc}{Loc}
\DeclareMathOperator{\Cov}{Cov}
\DeclareMathOperator{\FEt}{FEt}
\DeclareMathOperator{\set}{\text{-}Set}
\DeclareMathOperator{\Set}{Set}
\DeclareMathOperator{\FSet}{FSet}
\DeclareMathOperator{\ev}{ev}
\DeclareMathOperator{\Gal}{Gal}
\DeclareMathOperator{\cont}{cont}
\DeclareMathOperator{\Hom}{Hom}
\DeclareMathOperator{\Inn}{Inn}
\DeclareMathOperator{\Shv}{Shv}
\DeclareMathOperator{\Aut}{Aut}
\DeclareMathOperator{\End}{End}
\DeclareMathOperator{\pr}{pr}
\DeclareMathOperator{\map}{map}
\renewcommand{\phi}{\varphi}
\newcommand{\efg}{\pi_1^{\mathrm{\acute{e}t}}}
\newcommand{\pefg}{\pi_1^{\mathrm{pro\acute{e}t}}}
\newcommand{\pet}{{\mathrm{pro\acute{e}t}}}
\newcommand{\XX}{\mathfrak{X}}
\newcommand{\Aa}{\mathcal{A}}
\newcommand{\Cc}{\mathcal{C}}
\newcommand{\Ff}{\mathcal{F}}
\newcommand{\Ll}{\mathcal{L}}
\newcommand{\Mm}{\mathcal{M}}
\newcommand{\Pp}{\mathcal{P}}
\newcommand{\IC}{\mathbb{C}}
\newcommand{\IF}{\mathbb{F}}
\newcommand{\IG}{\mathbb{G}}
\newcommand{\IP}{\mathbb{P}}
\newcommand{\IQ}{\mathbb{Q}}
\newcommand{\IR}{\mathbb{R}}
\newcommand{\IZ}{\mathbb{Z}}
\newcommand{\bigast}{\mathop{\Large \mathlarger{\mathlarger{\ast}}}}
\newcommand{\into}{\hookrightarrow}
\title[Quasi-isogenies of supersingular abelian surfaces]{Quasi-isogeny groups of supersingular abelian surfaces via pro-étale fundamental groups}
\author[T. van den Hove]{Thibaud van den Hove}
\address{Thibaud van den Hove: Fachbereich Mathematik, TU-Darmstadt, Germany}
\email{hove@mathematik.tu-darmstadt.de}
\subjclass{Primary 14F35; Secondary 14H30, 11G10, 14K02}
\begin{document}
	
	\maketitle
	
	\begin{abstract}
		We consider a \(J_b(\IQ_p)\)-torsor on the supersingular locus of the Siegel threefold constructed by Caraiani-Scholze, and show that it induces an isomorphism between a free group on a finite number of generators, and the group of self-quasi-isogenies of a supersingular abelian surface, respecting a principal polarization and a prime-to-\(p\) level structure. Along the way, we classify certain pro-étale torsors in terms of the pro-étale fundamental group, describe the category of geometric covers of non-normal schemes, and use this to compute pro-étale fundamental groups of curves.
	\end{abstract}
	
	\begin{spacing}{0.05}
		\tableofcontents
	\end{spacing}
	
	\section{Introduction}
	In \cite[Proposition 1.13]{generic}, Caraiani and Scholze construct a pro-étale \(J_b(\IQ_p)\)-torsor on each Newton stratum \(\mathscr{S}^b\) of certain PEL Shimura varieties at hyperspecial level, where \(J_b(\IQ_p)\) is the locally profinite group of self-quasi-isogenies of a certain \(p\)-divisible group, respecting extra structures. 
	For connected strata, these torsors induce a continuous morphism \[\pefg(\mathscr{S}^b,x)\to J_b(\IQ_p),\] where \(x\) is a geometric point of \(\mathscr{S}^b\), and \(\pefg\) is the pro-étale fundamental group of Bhatt and Scholze classifying geometric covers, as introduced in \cite[§7]{proetale}. 
	The main goal of this paper is to show that in the case of the basic stratum  of the Siegel threefold, this map is injective, and to determine its image. 
	More precisely, we compute the pro-étale fundamental group in this special case, and the image of the map to \(J_b(\IQ_p)\) consists exactly of those quasi-isogenies coming from a quasi-isogeny of abelian surfaces. 
	This gives the following main theorem (cf.~Remark \ref{pefg of ss locus}, and Propositions \ref{image of the map} and \ref{injectivity of the map}).
	
	\begin{theorem}\label{theorem 4}
		Let \(p\geq 3\) be prime, \(N\geq 3\) be prime to \(p\), and let \(V_N\) be the supersingular locus of the moduli space of principally polarized abelian surfaces over \(\overline{\IF_p}\) with level-\(N\) structure. Then the \(J_b(\IQ_p)\)-torsor \(\Pp_N\) on \(V_N\) considered above induces an isomorphism between a free group on a finite number of generators, and the group of self-quasi-isogenies of a supersingular abelian surface, respecting a principal polarization and a level-\(N\) structure.
	\end{theorem}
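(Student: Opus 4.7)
The plan is to decompose the statement into the three assertions indicated by the cited references: a computation of the pro-étale fundamental group of \(V_N\), a determination of the image of the map induced by \(\Pp_N\), and the injectivity of that map.

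\textbf{Step 1: Geometry and pro-étale fundamental group of \(V_N\).} I would start from the classical description (Katsura--Oort, Li--Oort) of the supersingular locus of the Siegel threefold: \(V_N\) is a one-dimensional scheme whose irreducible components are all isomorphic to \(\IP^1\), meeting transversally (as ordinary double points) at the finite set of superspecial points. In particular, its normalization \(\widetilde{V}_N\) is a finite disjoint union of projective lines, and \(V_N\) itself is a non-normal, reduced, connected (for \(N\) large enough) nodal curve. The paper's earlier description of geometric covers of non-normal schemes then applies: a geometric cover of \(V_N\) is equivalent to a geometric cover of \(\widetilde{V}_N\) together with an identification of the fibres over pairs of preimages of each node. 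Since each \(\IP^1\) is simply connected (every geometric cover being trivial), this gluing datum reduces to a representation of the fundamental groupoid of the dual graph \(\Gamma_N\) of \(V_N\). Hence \(\pefg(V_N,x)\) identifies with the topological fundamental group of \(\Gamma_N\), which is free on finitely many generators because \(\Gamma_N\) is a finite graph (Remark \ref{pefg of ss locus}).

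\textbf{Step 2: Image of the map.} The next task is to identify the image of \(\pefg(V_N,x)\to J_b(\IQ_p)\) under \(\Pp_N\) with the group \(Q\) of self-quasi-isogenies of a supersingular abelian surface preserving the polarization and the level-\(N\) structure. Containment of the image in \(Q\) follows from the modular interpretation of \(V_N\): each irreducible component parametrizes abelian surfaces with a distinguished chain of isogenies, so continuing the \(p\)-divisible group fibre along a loop always yields an isogeny that extends to the whole abelian surface, simply because the entire datum is algebraic on each component. For the reverse containment, I would use the Rapoport--Zink uniformization of the basic stratum: any element of \(Q\) is realized by moving between two (superspecial) points of \(V_N\) which differ by a prescribed quasi-isogeny, and the combinatorial description in Step 1 shows these two points can be connected along the components of \(V_N\) by an explicit loop in \(\Gamma_N\). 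This gives Proposition \ref{image of the map}.

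\textbf{Step 3: Injectivity.} For injectivity, I would use the Rapoport--Zink side: the \(J_b(\IQ_p)\)-torsor \(\Pp_N\) is built from the Rapoport--Zink space uniformizing \(V_N\), so a loop \(\gamma\) in \(\Gamma_N\) whose image in \(J_b(\IQ_p)\) is trivial must, after trivialising the torsor, lift to a loop in the Rapoport--Zink space. But the Rapoport--Zink tower uniformizing \(V_N\) is (by its construction as a quotient of a simply connected \(p\)-adic analytic space by the discrete group \(Q\)) a universal cover in the sense relevant to \(\pefg\); thus the lifted loop is null-homotopic, and hence \(\gamma\) itself is trivial in \(\pefg(V_N,x)\). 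This is the content of Proposition \ref{injectivity of the map}.

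\textbf{Main obstacle.} Step 1 is largely formal once the geometric description of \(V_N\) and the paper's covering-theoretic machinery for non-normal schemes are in hand, and the surjectivity onto \(Q\) in Step 2 reduces to explicit combinatorics of the Katsura--Oort stratification. The principal difficulty is injectivity (Step 3): one needs to turn the Rapoport--Zink uniformization into a genuine \emph{pro-étale} covering of \(V_N\) by a simply connected space, and to verify that the induced map on \(\pefg\) is precisely \(\Pp_N\). Handling this requires a careful passage between the rigid-analytic Rapoport--Zink picture on the one hand, and the pro-étale site of the nodal scheme \(V_N\) on the other, which is the main technical input of the paper.
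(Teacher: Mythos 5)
Step 1 matches the paper's Remark \ref{pefg of ss locus}: you reduce \(\pefg(V_N,x)\) to the fundamental group of the dual graph of a nodal union of projective lines via Theorem \ref{pefg of curves over separably closed field}. That part is fine.

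Steps 2 and 3, however, diverge from the paper in a way that leaves a genuine gap. The paper deliberately \emph{does not} invoke Rapoport--Zink uniformization for either step (the introduction is explicit about this). For the image (Proposition \ref{image of the map}), the paper gives an elementary argument: it writes a polarization- and level-preserving self-quasi-isogeny \(\psi\) as \(p^{-n}\cdot(p^n\psi)\), factors \([p^n]\) and \(p^n\psi\) into chains of degree-\(p\) isogenies, rearranges them into a zigzag \(A_x\leftarrow A'_1\to A_1\leftarrow\dots\to A_x\), and shows each local piece \(A_i\leftarrow A'\to A_j\) with distinct degree-\(p\) kernels is a product of supersingular elliptic curves fitting into a Moret--Bailly family, hence giving a path in \(V_N\). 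Your sketch of the reverse containment never identifies which loop in \(\Gamma_N\) realizes a given \(\psi\); it is plausible this could be done through uniformization, but as stated it is only an assertion, and you would still have to verify compatibility of \(\Pp_N\) with the \(p\)-adic uniformization map, which the paper's route avoids.

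The more serious problem is Step 3. You want to deduce injectivity from simple connectedness of the Rapoport--Zink space \(\Mm_b\), claiming \(\Mm_b\) is "a quotient of a simply connected \(p\)-adic analytic space." But the logical order is the other way around: in the paper, \(\Mm_b\) being simply connected is shown to be \emph{equivalent} to injectivity of \(\pefg(V_N,x)\to J_b(\IQ_p)\) (since \(\Mm_b\) is the smallest geometric cover of \(V_N\) trivializing \(\Pp_N\)), and simple connectedness is obtained only as a \emph{corollary} of Proposition \ref{injectivity of the map}. There is no independent, prior result asserting that this basic Rapoport--Zink space is simply connected, and no "simply connected \(p\)-adic analytic space" covering \(\Mm_b\) in the relevant sense. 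As written, your Step 3 is circular. What the paper actually does is a direct combinatorial argument: starting from a zigzag \eqref{sequence we want to show is trivial} whose composite is the identity, it builds a pyramid of fibre products \(B_{j,i}\) of supersingular abelian surfaces and, using the fact that a finite subgroup scheme of degree \(p^n\) not containing \(\alpha_p\times\alpha_p\) has a unique subgroup of order \(p\) (together with uniqueness of the \(\alpha_p\times\alpha_p\) subgroup), shows there must be two consecutive arrows with equal kernel, so the zigzag shortens; induction on length finishes the proof. You would need to supply an argument of this concrete type, or else find a genuinely independent proof of simple connectedness of \(\Mm_b\).

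Finally, a small gap in Step 1: you should justify connectedness of \(V_N\) (the paper cites Oort's results) rather than saying "for \(N\) large enough," and for Steps 2 and 3 to identify loops with quasi-isogenies you also need the fact, used implicitly in the paper's Remark \ref{pefg of ss locus}, that the singularities of \(V_N\) are ordinary, so that \(V_N\) itself (not just its seminormalization) is obtained by gluing copies of \(\IP^1\) along closed points.
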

	
	Part of this theorem is an instance of the fact that for the basic stratum of general PEL Shimura varieties, the image of \(\pefg(\mathscr{S}^b,x)\to J_b(\IQ_p)\) is related to p-adic uniformization of Shimura varieties as in \cite{rapoportzink}, cf.~\cite[Remark 1.14]{generic}, although we do not make use of \(p\)-adic uniformization to determine this image. On the other hand, we do use it to get the corollary that the corresponding Rapoport-Zink space is simply connected. Indeed, this is the smallest cover of \(V_N\) trivializing \(\Pp_N\) (essentially by definition), so that this follows from injectivity of \(\pefg(V_N,x)\to J_b(\IQ_p)\).
	
	Before we mention other results we will use, let us sketch the strategy for Theorem \ref{theorem 4}. We consider families of principally polarized supersingular abelian surfaces with level structure over the projective line, constructed by Moret-Bailly, which give morphisms \(\IP^1_{\overline{\IF_p}}\to V_N\). By work of Katsura and Oort, these morphisms are jointly surjective, and each normalize an irreducible component of \(V_N\). This will allow us to compute \(\pefg(V_N,x)\), by Theorem \ref{theorem 2} below. Using that \(\IP^1_{\overline{\IF_p}}\) is simply connected, we then study the morphism \[\pefg(V_N,x)\to J_b(\IQ_p),\] and use arguments on the level of abelian surfaces to show injectivity and determine its image.
	
	To compute \(\pefg(V_N,x)\), we will prove more general results about the pro-étale fundamental group. To partially reduce the problem to determining étale fundamental groups, we show that all geometric covers of a scheme can be obtained by gluing finite étale covers of its normalization. More precisely, we prove Theorem \ref{theorem 1} below, cf.~Theorem \ref{geometric covers of seminormal schemes}. Here, we need some small topological assumption to ensure the pro-étale fundamental group exists and behaves well. Moreover, we assume the normal locus to be open, so that we can give the non-normal locus the structure of a closed subscheme. This holds e.g.~for quasi-excellent schemes \cite[Corollaire 6.13.5]{ega4.2}. 
	On the other hand, by topological invariance of the pro-étale fundamental group, we may assume reducedness without losing generality. For any scheme \(X\), we denote the category of geometric covers of \(X\) by \(\Cov_X\). Let us also note that a similar result, with different assumptions and proof, was obtained in \cite[Lemma 2.17]{lara2}: there, the authors allow more general proper covers than just the normalization map, but at the cost of assuming that the scheme is locally noetherian.
	
	\begin{theorem}\label{theorem 1}
		Let \(X\) be a locally topologically noetherian, connected, reduced scheme, whose normal locus is open. Denote by \(\pi\colon X^\nu\to X\) its normalization, \(Y\subseteq X\) its non-normal locus, and let \(Z=\pi^{-1}(Y)\), where both \(Y\) and \(Z\) have the reduced closed subscheme structure. Then base change induces an equivalence
		\[\Cov_X\cong \Cov_{X^\nu}\times_{\Cov_Z} \Cov_Y.\]
	\end{theorem}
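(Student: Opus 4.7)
The functor in question is base change
\[F\colon \Cov_X\to \Cov_{X^\nu}\times_{\Cov_Z}\Cov_Y,\qquad T\mapsto \bigl(T\times_X X^\nu,\, T\times_X Y\bigr),\]
well defined because pullbacks of geometric covers are geometric covers and the two pullbacks to $Z$ agree canonically. The strategy is to prove faithfulness directly, then essential surjectivity via a Ferrand-style pinching construction (the main obstacle), and finally deduce fullness from essential surjectivity applied to graphs.

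\emph{Faithfulness.} A geometric cover of $X$ is separated, being étale and satisfying the valuative criterion of Bhatt-Scholze. Hence the equalizer of two morphisms $\alpha,\beta\colon T_1\to T_2$ between geometric covers is an open and closed subscheme of $T_1$. If $\alpha$ and $\beta$ agree after pullback to $X^\nu$ and to $Y$, this equalizer contains $T_1\times_X X^\nu$ and $T_1\times_X Y$, which jointly cover $T_1$ since $X^\nu\sqcup Y\to X$ is surjective.

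\emph{Essential surjectivity.} This is the main obstacle. Given $(T^\nu,T^Y,\varphi)$, one constructs $T\to X$ as the pinching of $T^\nu$ and $T^Y$ along $T^Z:=T^\nu\times_{X^\nu}Z\cong T^Y\times_Y Z$. Working Zariski-locally on $X$, we may assume everything is affine. Since $T^Z\hookrightarrow T^\nu$ is a closed immersion and $T^Z\to T^Y$ is integral (being a base change of the integral map $Z\to Y$), Ferrand's pushout theorem produces a scheme $T:=T^\nu\sqcup_{T^Z}T^Y$, integral over $X$, whose defining square is both cocartesian and cartesian; in particular, $T\times_X X^\nu\cong T^\nu$ and $T\times_X Y\cong T^Y$. Étaleness of $T\to X$ is then immediate over $X\setminus Y$, where $T\cong T^\nu$ by the cartesian property; at points of $Y$, it follows from a local analysis of the pinching together with étaleness of $T^Y\to Y$. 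The valuative criterion is verified by case analysis on DVR-diagrams into $X$, depending on where the closed point lands, using $\varphi$ in the boundary case. The hardest technical step will be ensuring the cartesian-square property of Ferrand's pinching using only the integrality (rather than finiteness) of $\pi$, handled at the affine level.

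\emph{Fullness.} Given a compatible pair $(\alpha^\nu,\alpha^Y)$, view them as open and closed subschemes $\Gamma^\nu\subseteq T_1^\nu\times_{X^\nu}T_2^\nu$ and $\Gamma^Y\subseteq T_1^Y\times_Y T_2^Y$, using that sections of étale separated morphisms are open and closed immersions. These agree as subschemes of the pullback of $T_1\times_X T_2$ to $Z$, and hence glue to an open and closed subscheme $\Gamma\subseteq T_1\times_X T_2$ by essential surjectivity applied to open immersions. That $\Gamma\to T_1$ is an isomorphism is checked after pullback to $X^\nu$ and $Y$, where it holds by construction. This gives the desired morphism $T_1\to T_2$ in $\Cov_X$.
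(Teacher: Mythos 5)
Your plan is the right one in outline — Ferrand pinching to construct a quasi-inverse to base change, and it mirrors the paper's proof in spirit — but there is a genuine gap in essential surjectivity: you treat the pinching as a scheme étale over $X$, when it is only étale over $\hat{X} := X^\nu \sqcup_Z Y$. Ferrand's theorem (and the more general Stacks Project arguments) make the defining square of $T := T^\nu \sqcup_{T^Z} T^Y$ cartesian in the sense $T^Z \cong T^\nu \times_T T^Y$, and one can deduce $T \times_{\hat{X}} X^\nu \cong T^\nu$ and $T \times_{\hat{X}} Y \cong T^Y$; but your claim ``$T \times_X X^\nu \cong T^\nu$'' does not follow, because the canonical map $\hat{X} \to X$ is a universal homeomorphism that is typically \emph{not} an isomorphism. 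Consequently $T \to \hat{X} \to X$ need not be étale, and the ``local analysis of the pinching'' you invoke at points of $Y$ cannot succeed in general.

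A concrete failure: let $X$ be the cuspidal cubic over an algebraically closed field $k$. Then $X^\nu = \mathbb{A}^1_k$, $Y = Z = \spec k$ (the cusp and its single preimage), and $\hat{X} = X^\nu \sqcup_Z Y \cong X^\nu \neq X$. Feed the identity datum $(T^\nu, T^Y, \varphi) = (X^\nu, Y, \mathrm{id})$ into your construction: the pushout is $T = \hat{X} = X^\nu$, and $X^\nu \to X$ is not étale (the normalization of a cuspidal cubic is not flat). The correct object in $\Cov_X$ base-changing to $(X^\nu, Y)$ is $X$ itself, not $\hat{X}$. So the pinching, as you set it up, does not land in $\Cov_X$.

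The missing ingredient is the reduction to $\hat{X}$ via topological invariance. This is precisely what the paper's Lemma \ref{pushout is universal homeomorphism} supplies: $\phi\colon \hat{X} \to X$ is a universal homeomorphism, hence $\Cov_X \cong \Cov_{\hat{X}}$, and since $X^\nu \to X$, $Y \to X$, $Z \to X$ all factor through $\hat{X}$ with the same non-normal locus $Y$ and preimage $Z$, one may prove $\Cov_{\hat{X}} \cong \Cov_{X^\nu} \times_{\Cov_Z} \Cov_Y$ instead. Over $\hat{X}$, your pinching is étale by base change of Ferrand pinchings (étaleness: Stacks Tag 08KQ), and the cartesianness statements you want become correct. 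Your faithfulness and fullness arguments are fine, and your remaining worry about the cartesian-square property for integral (rather than finite) $\pi$ is misplaced — Ferrand's Théorème 7.1 gives that in full generality; the finiteness is only used to verify the open-affine condition for existence of the pushout when $T^\nu$ is not affine, and this can be sidestepped by citing Stacks Tag 0ECK, as the paper does.
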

	
	The 2-fibre product \(\Cov_{X^\nu}\times_{\Cov_Z} \Cov_Y\) can be viewed as a descent data category for the covering \(X^\nu\coprod Y\to X\); as \(Y\to X\) is a monomorphism, there is no need for cocycle conditions. 
	
	While Theorem \ref{theorem 1} above does not allow us to compute the pro-étale fundamental group in general, it suffices for the case of curves over separably closed fields, which is enough for our purposes. In that case, we get the formula below, cf.~Theorem \ref{pefg of curves over separably closed field}. Again, we note that a similar result was independently obtained in \cite[Theorem 2.27]{lara2}.
	
	\begin{theorem}\label{theorem 2}
		Let \(X\) be a connected curve over a separably closed field \(k\), with irreducible components \(X_0, \ldots, X_m\). Choose geometric basepoints \(x_i\) in the normal locus of \(X_i\) for each \(i\), and for each \(x\in X\), let \(b_x\) denote the number of branches at \(x\). Then there is an isomorphism
		\[\pefg(X,x_0) \cong F_{b(X)-m} *^N \bigast_{i=0,\ldots,m}^{\quad N} \efg(X_i^\nu,x_i),\]
		with \(F_{b(X)-m}\) a discrete free group on \(b(X)-m\) generators, and \(b(X)=\sum_{x\in X} (b_x-1)\).
	\end{theorem}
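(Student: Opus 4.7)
The strategy is to combine Theorem~\ref{theorem 1} with the classical identification \(\pefg=\efg\) on geometrically unibranch connected schemes (see \cite[§7]{proetale}) to present \(\Cov_X\) as a category of gluing data along a finite bipartite graph, and then to read off the classifying Noohi group via the dual-graph presentation.

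I first apply Theorem~\ref{theorem 1}. The curve \(X\) is locally topologically noetherian with open normal locus, so
\[
\Cov_X\simeq \Cov_{X^\nu}\times_{\Cov_Z}\Cov_Y.
\]
Since \(k\) is separably closed, \(Y\) and \(Z\) are finite disjoint unions of copies of \(\spec k\), whence \(\Cov_Y\simeq\prod_{y\in Y}\Set\) and \(\Cov_Z\simeq\prod_{z\in Z}\Set\). Each \(X_i^\nu\) is smooth (hence geometrically unibranch) and connected, so \(\pefg(X_i^\nu,x_i)=\efg(X_i^\nu,x_i)=:\Pi_i\) and \(\Cov_{X_i^\nu}\) is equivalent to the category of continuous \(\Pi_i\)-sets. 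Picking, for each \(i\) and each \(z\in Z\cap X_i^\nu\), an étale path in \(X_i^\nu\) from \(x_i\) to \(z\) identifies the fiber-at-\(z\) functor with the underlying-set functor.

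Next I encode the gluing combinatorially via the bipartite graph \(\Gamma\) with vertices \(v_0,\dots,v_m\) (one per component) and \(w_y\) (one per \(y\in Y\)); for each \(z\in Z\), one edge joining \(v_{i(z)}\) (where \(z\in X_{i(z)}^\nu\)) to \(w_{\pi(z)}\). Under the identifications above, a geometric cover of \(X\) is the same as a tuple \((S_i,T_y,\alpha_z)\), with \(S_i\) a continuous \(\Pi_i\)-set, \(T_y\) a set, and \(\alpha_z\colon S_{i(z)}\xrightarrow\sim T_{\pi(z)}\) a bijection; morphisms are the obvious ones. Since \(X\) is connected, so is \(\Gamma\). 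Fix a spanning tree \(\mathcal{T}\subseteq\Gamma\) and use its edges to identify all \(S_i\) and \(T_y\) with a single set \(S\). Each \(\Pi_i\) then acts on \(S\), and each of the
\[
|E(\Gamma)|-|V(\Gamma)|+1 = |Z|-(m+1+|Y|)+1 = \sum_{y\in Y}(b_y-1)-m = b(X)-m
\]
edges of \(\Gamma\) not in \(\mathcal{T}\) contributes an unconstrained self-bijection of \(S\), with no further compatibility required.

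Thus \(\Cov_X\) is equivalent to the category of sets \(S\) carrying continuous actions of each \(\Pi_i\) together with \(b(X)-m\) arbitrary self-bijections. This is the category of continuous \(G\)-sets for the abstract free product \(G:=F_{b(X)-m}*\bigast_i\Pi_i\) equipped with the final group topology making each \(\Pi_i\) (profinite) and \(F_{b(X)-m}\) (discrete) map continuously into \(G\). Under the dictionary of \cite[§7]{proetale} between such categories and Noohi groups, \(\pefg(X,x_0)\) is the Noohi completion of \(G\), which by its universal property equals the Noohi coproduct \(F_{b(X)-m}*^N\bigast_i^N\Pi_i\). I expect the main obstacle to be the topological identification: one must check that the continuous automorphism group of the fiber functor \(S_0\) on \(\Cov_X\), computed via the gluing description above, genuinely recovers the Noohi coproduct rather than merely the abstract free product. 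This reduces, via the universal property of \(*^N\), to the statement that an action on a set is continuous for the above topology on \(G\) iff each constituent \(\Pi_i\)-action is continuous (the discrete free factor imposes no condition), which is immediate.
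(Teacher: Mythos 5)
Your proof is correct and follows essentially the same strategy as the paper's: apply Theorem~\ref{theorem 1} to reduce to gluing data along the non-normal locus, use \(\pefg=\efg\) on the normal components, and read off the free factor \(F_{b(X)-m}\) from the gluing bijections beyond a spanning tree of the dual graph, with the Noohi coproduct identification handled via the universal property of the coproduct topology exactly as you indicate. The paper's auxiliary choices of \(y_i^i, y_i^j\) and the set \(T\) amount to building the same spanning tree by hand, so your graph-theoretic phrasing and Euler-characteristic count are a cleaner repackaging of the same argument rather than a genuinely different route.
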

	
	Here, the notation \(*^N\) denotes the Noohi coproduct, but for this introduction, one can think of it as the usual topological coproduct, cf.~Remark \ref{coproducts of noohi groups}. 
	For curves over general fields, we can then use the \emph{fundamental exact sequence} from \cite{lara}. 
	Let us also mention another method from Lara to compute pro-étale fundamental groups, namely to use descent to deduce a general Van Kampen type theorem, cf.~\cite[Corollary 3.19]{lara}. Our method however, will show that elements of the free group \(F_{b(X)-m}\) can be thought of as loops in \(X\), recovering the topological intuition of fundamental groups. Moreover, this interpretation will be useful while proving Theorem \ref{theorem 4}.
	
	Finally, we need to know how the torsor \(\Pp_N\) induces a morphism of topological groups. This is an instance of the more general fact that the pro-étale fundamental group classifies pro-étale \(G\)-torsors for certain topological groups \(G\), similarly to the case of topological and étale fundamental groups. However, since pro-étale fundamental groups naturally live in the category of Noohi groups, this classification holds for all Noohi groups \(G\) (and in particular for the locally profinite groups); cf.~Theorem \ref{pefg classifies torsors}, which describes the whole groupoid of \(G\)-torsors.
	
	\begin{theorem}\label{theorem 3}
		Let \(X\) be a locally topologically noetherian connected scheme with geometric basepoint \(x\), and \(G\) a Noohi group. Then the isomorphism classes of pro-étale \(G\)-torsors on \(X\) are in bijection with \(\Hom_{\cont}(\pefg(X,x),G)/\Inn(G)\).
	\end{theorem}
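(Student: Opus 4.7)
The plan is to construct mutually inverse maps between the two sides, leveraging the Bhatt--Scholze equivalence $\Cov_X \simeq \pi\set$ (discrete sets with continuous $\pi$-action, with $\pi := \pefg(X,x)$), together with the structural fact that a Noohi group $G$ has a neighborhood basis of open subgroups and is the inverse limit of its discrete coset spaces $G/U$ as a uniform space. Throughout, I denote by $F_x$ the fiber functor at $x$ used to define $\pi = \Aut(F_x)$.

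For the forward direction, let $P$ be a pro-étale $G$-torsor on $X$. Its fiber $F_x(P)$ is a $G$-torsor on a point, hence non-canonically isomorphic to $G$ as a right $G$-set via any choice $p \in F_x(P)$ sending $g \mapsto p \cdot g$. The functorial left action of $\pi$ on $F_x(P)$ commutes with the right $G$-action, so under this trivialization it takes the form of left multiplication by a homomorphism $\rho_p \colon \pi \to G$. Continuity reduces to openness of $\rho_p^{-1}(U)$ for each open $U \subseteq G$, and this preimage is the stabilizer in $\pi$ of the coset $pU \in F_x(P)/U$. The latter action is continuous because the quotient $P/U$ is a geometric cover of $X$ (it is a pro-étale sheaf that is locally $\underline{G/U}$), so $F_x(P/U) = F_x(P)/U$ lies in $\pi\set$ by Bhatt--Scholze. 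Replacing $p$ by $pg$ conjugates $\rho_p$ by $g$, yielding a well-defined class in $\Hom_{\cont}(\pi, G)/\Inn(G)$.

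For the backward direction, given a continuous $\rho \colon \pi \to G$, assemble a torsor $P_\rho$ from its discrete quotients. For each open $U \subseteq G$, the coset space $G/U$ carries a continuous $\pi$-action via $\rho$ (with open stabilizers by continuity) and a commuting right $G$-action through $G \twoheadrightarrow G/U$; by Bhatt--Scholze this corresponds to a geometric cover $X_U \to X$ with right $G$-action. These form an inverse system as $U$ shrinks, and I set $P_\rho := \lim_U X_U$ as a pro-étale sheaf on $X$, with a right $G$-action obtained from $G \cong \lim_U G/U$. Pro-étale local triviality follows after pullback to a compatible pro-étale cover of $X$ trivializing all the $X_U$ simultaneously, which exists by refining along a cofinal system of Galois trivializations of the individual $X_U$.

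Finally, the two constructions are mutually inverse by inspection of fibers at $x$: $F_x(P_\rho) = \lim_U G/U = G$ with $\pi$-action via $\rho$, so the forward recipe applied with basepoint $1 \in G$ recovers $\rho$; conversely, starting from $P$ with trivialization $p$, the fibers of $P$ and $P_{\rho_p}$ agree by construction, and this promotes to a global isomorphism by applying $\Cov_X \simeq \pi\set$ to each quotient $P/U = (P_{\rho_p})/U$. The main obstacle is the passage between the discrete world (where $\Cov_X \simeq \pi\set$ applies directly) and the topological group $G$: since $G$ is typically not itself a discrete $\pi$-set, one must work through all discrete quotients $G/U$, and recovering both $P_\rho$ and its $G$-action from this pro-system is exactly what the Noohi hypothesis $G \cong \lim_U G/U$ permits.
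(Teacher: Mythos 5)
Your backward direction rests on the assertion ``$G \cong \lim_U G/U$'', and this is false for general Noohi groups. The paper explicitly addresses this: there is a natural \emph{injective} map $G \to \varprojlim_U G/U$, but it is typically not surjective. The counterexample given is $G=\Aut(S)$ with the compact-open topology, for $S$ an infinite discrete set: here the inverse limit $\varprojlim_F G/U_F$ (over pointwise stabilizers of finite subsets $F\subseteq S$) identifies with the set of \emph{injective} maps $S\to S$, not merely the bijections. More structurally, $\varprojlim_U G/U$ is naturally homeomorphic to the monoid $\End(F_G)$, while the Noohi condition only gives $G\cong\Aut(F_G)\subseteq\End(F_G)$.

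Concretely, your $P_\rho := \lim_U X_U$ is what the paper calls $\Tilde{\Pp}$, and it is generally \emph{not} a $G$-torsor: over a w-contractible $W$ and connected $T\in W_\pet$, its sections are $\cont(T,\varprojlim_U G/U)\cong\varprojlim_U G/U \cong \End(F_G)$, which is strictly larger than $G$ in general. In particular, its stalk at $x$ is $\End(F_G)$, not $G$, so your claim that ``$F_x(P_\rho)=\lim_U G/U=G$'' already fails. The paper's fix is to first note that $\Tilde{\Pp}$ carries an $\Ff_{\End(F_G)}$-action, and then to cut out the subsheaf $\Pp\subseteq\Tilde{\Pp}$ of sections whose restrictions to connected pieces take values in $G\subseteq\End(F_G)$; this requires a sheaf-theoretic characterization of the subset $G=\Aut(F_G)$ inside $\End(F_G)$, which is supplied by Lemma \ref{no partial inverses}: $\phi\in\End(F_G)$ is invertible iff $\End(F_G)\cdot\phi=\End(F_G)$. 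Your argument has no analogue of this step, so the construction does not produce an $\Ff_G$-torsor. (Your forward direction is closer in spirit to the paper's contracted-product argument and is essentially salvageable, though one should be careful about what $F_x(P)$ means when $P\notin\Loc_X$; the paper sidesteps this by working entirely with the functor $G\set\to\Loc_X$, $S\mapsto\Pp\times^{\Ff_G}\Ff_S$, and invoking \cite[Theorem 7.2.5]{proetale}.)
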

	
	Note that we cannot expect the theorem to hold for groups that are not Noohi. This is similar to the case of étale fundamental groups, which are naturally profinite, but where the analogue of this theorem fails already for infinite discrete groups. As an example, the nodal curve over a separably closed field has a universal geometric cover, which is an étale \(\IZ\)-torsor. But since the étale fundamental group of this nodal curve is \(\widehat{\IZ}\), this would correspond to a non-trivial continuous homomorphism \(\widehat{\IZ}\to \IZ\), and such maps do not exist. This is another aspect with respect to which the pro-étale fundamental group behaves better than the étale version.
	
	\textbf{Outline.} In Section \ref{section recollections}, we recall the definitions and basic properties of the pro-étale fundamental group and Noohi groups, that we will need in the rest of the paper. In Section \ref{section geometric covers}, we explain how one can obtain geometric covers of non-normal schemes by gluing geometric covers of their normalizations, which leads to Theorem \ref{theorem 1}. Specializing to the case of curves, we use this in Section \ref{section computing} to prove Theorem \ref{theorem 2}, and to compute some examples of pro-étale fundamental groups of curves over non-separably closed fields. In Section \ref{section torsors}, we show Theorem \ref{theorem 3}, and in finally, in Section \ref{section siegel}, we work out the concrete example of a torsor on the basic stratum of the Siegel threefold, leading to Theorem \ref{theorem 4}.
	
	\textbf{Acknowledgements}
	First and foremost, I thank Johannes Anschütz for advising my Master's thesis at the University of Bonn, on which most of this work is based. I also thank Mingjia Zhang for her suggestion to look at the example of the Siegel threefold, and Peter Scholze for suggesting that the resulting map \(\pefg(V_N,x)\to J_b(\IQ_p)\) should be injective, and that this is equivalent to the corresponding Rapoport-Zink space being simply connected. Finally, I thank Torsten Wedhorn for helping me realize I could weaken some of the assumptions from my thesis, Timo Richarz, Jakob Stix and an anonymous referee for very helpful comments on earlier versions of this paper, and Marcin Lara for pointing out a mistake in an earlier draft.
	
	Part of this work was done while supported financially by the European Research Council (ERC) under the European Union’s Horizon 2020 research and innovation programme (grant agreement 101002592), and logistically by the Deutsche Forschungsgemeinschaft (DFG), through the TRR 326 \emph{Geometry and Arithmetic of Uniformized Structures} (project number 444845124).

	\section{The pro-étale fundamental group}\label{section recollections}
	In this section, let us recall the definition of the pro-étale fundamental group, and basic properties that we will need.
	
	\begin{definition}
		\begin{enumerate}
			\item A morphism \(f\colon Y\to X\) of schemes is \emph{weakly étale} if both \(f\) and the diagonal \(\Delta_f:Y\to Y\times_X Y\) are flat.
			\item The \emph{pro-étale site} \(X_\pet\) of \(X\) is defined as the category of weakly étale \(X\)-schemes, with fpqc covers.
		\end{enumerate}
		Let us denote the category of set-valued sheaves on \(X_\pet\) by \(\Shv(X_\pet)\), and by \(\Loc_X\) the full subcategory of locally constant sheaves.
	\end{definition}
	
	\begin{definition}\label{definition pefg}
		For a scheme \(X\) and a geometric basepoint \(x\) of \(X\), let \(\ev_x\colon \Loc_X\to \Set\) be the evaluation functor. We define the \emph{pro-étale fundamental group of \(X\) at the basepoint \(x\)} as \(\pefg(X,x):=\Aut(\ev_x)\). We endow it with the topology coming from the compact-open topology on each \(\Aut(\ev_x(\Ff))\), for \(\Ff\in \Loc_X\).
	\end{definition}
	
	For a topological group \(G\), we denote by \(G\set\) the category of discrete sets with a continuous \(G\)-action, which comes with a fibre functor \[F_G\colon G\set\to \Set,\] given by the forgetful functor. The main theorem about the pro-étale fundamental group is the following:
	\begin{theorem}[{\cite[Theorem 1.10]{proetale}}]
		If \(X\) is a locally topologically noetherian and connected scheme, then there is an equivalence \(\Loc_X\cong \pefg(X,x)\set\), compatible with fibre functors.
	\end{theorem}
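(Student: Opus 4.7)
The plan is to exhibit $(\Loc_X, \ev_x)$ as an \emph{infinite Galois category} in the sense of Bhatt--Scholze, and then to invoke their general reconstruction theorem: any infinite Galois category $(\Cc, F)$ is canonically equivalent to $G\set$, where $G = \Aut(F)$ carries the compact-open topology on each $\Aut(F(c))$, making it a Noohi topological group. Applied to $(\Loc_X, \ev_x)$, this produces the stated equivalence $\Loc_X\simeq \pefg(X,x)\set$, with the topology matching Definition \ref{definition pefg} on the nose. The abstract half of the argument is purely category-theoretic, so the scheme-theoretic work lies in verifying the axioms.

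First I would check the categorical properties of $\Loc_X$: it should be closed under finite limits and arbitrary small colimits taken in $\Shv(X_\pet)$, and every object should decompose as a coproduct of connected ones. Closure under finite limits is automatic. Closure under arbitrary colimits is precisely the point where the pro-étale topology is needed in place of the étale one, since $X_\pet$ has a basis of weakly contractible affines on which every locally constant sheaf is constant, so colimits of locally constant sheaves remain locally constant when evaluated there. For the decomposition into connected objects, I would use the equivalence between locally constant pro-étale sheaves and geometric covers of $X$, and decompose a geometric cover into its connected components, using the locally topologically noetherian hypothesis to ensure that this decomposition exists and is well-behaved.

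Next I would verify that $\ev_x$ is a fiber functor: faithful, conservative, and preserving finite limits and arbitrary colimits, with $\ev_x(\Ff)\neq \emptyset$ for connected $\Ff$. Faithfulness and conservativity come from connectedness of $X$ combined with local constancy; preservation of finite limits is immediate from computing $\ev_x$ via a pro-étale neighborhood of $x$; preservation of arbitrary colimits is the subtlest check and follows by realizing $\ev_x$ as sections over a weakly contractible pro-étale cover of $x$, on which sheaf sections commute with all small colimits. Non-emptiness of fibers at connected objects is then a consequence of the decomposition argument from the previous paragraph. Once these axioms are in place, the abstract reconstruction theorem of \cite[\S 7.2]{proetale} immediately yields the equivalence. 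The principal obstacle is this simultaneous control of all small colimits in $\Loc_X$ and of their preservation by $\ev_x$; both genuinely require the existence of enough weakly contractible pro-étale covers of $X$, which is exactly what the local topological noetherianity hypothesis ensures, and which is what distinguishes the pro-étale fundamental group from the classical profinite étale $\efg$.
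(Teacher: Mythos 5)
Your proposal reproduces the strategy of Bhatt--Scholze's own proof: show that $(\Loc_X,\ev_x)$ is an infinite Galois category, then invoke the abstract reconstruction theorem (\cite[Theorem 7.2.5]{proetale}). The identification of where the pro-\'etale topology (w-contractible covers) and the locally topologically noetherian hypothesis enter is also accurate, and in particular the reduction from $\Loc_X$ to $\Cov_X$ in order to decompose into connected components is exactly what \cite[Lemma 7.3.9]{proetale} supplies.

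One genuine omission: the reconstruction theorem in \cite[\S 7.2]{proetale} applies to \emph{tame} infinite Galois categories, and you have not listed tameness among the axioms to be checked. Tameness asks that $\Aut(\ev_x)=\pefg(X,x)$ act \emph{transitively} on $\ev_x(\Ff)$ for every connected $\Ff\in\Loc_X$; this is strictly stronger than your stated requirement that $\ev_x(\Ff)\neq\emptyset$ for connected $\Ff$, and it does not follow formally from the other axioms (Bhatt--Scholze give a non-tame example). Without it, the functor $\Cc\to\pi_1(\Cc,F)\set$ need not be an equivalence. The verification of tameness for $(\Loc_X,\ev_x)$ is part of \cite[Lemma 7.4.1]{proetale}, and again goes through geometric covers: given two points of the fibre of a connected cover $Y\to X$, one exhibits an automorphism of $\ev_x$ exchanging them by a pointed-covers argument, using local topological noetherianity to ensure connected components of covers are open. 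You should add this step to your outline; the rest of the argument is sound.
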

	
	Note that some topological assumption on \(X\) is necessary, as there exist examples of connected schemes for which such an equivalence cannot hold, cf.~\cite[Example 7.3.12]{proetale}. It will also be useful to have a geometric interpretation of locally constant pro-étale sheaves, similarly to the fact that the finite locally constant étale sheaves are exactly those represented by finite étale covers.
	
	\begin{definition}
		A morphism \(f\colon Y\to X\) of schemes is a \emph{geometric cover} if it is étale and satisfies the valuative criterion for properness. We denote the category of geometric covers of \(X\) by \(\Cov_X\).
	\end{definition}
	
	\begin{proposition}[{\cite[Lemma 7.3.9]{proetale}}]
		If \(X\) is locally topologically noetherian, then \(\Loc_X=\Cov_X\), as subcategories of \(\Shv(X_\pet)\).
	\end{proposition}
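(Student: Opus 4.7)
Both $\Cov_X$ and $\Loc_X$ embed into $\Shv(X_\pet)$, so the proposition amounts to checking an equality of full subcategories. The plan is to analyse each side pro-étale locally, using the key input from \cite{proetale} that any scheme admits a pro-étale cover by $w$-contractible affines $U$, over which every faithfully flat weakly étale map splits.

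For the inclusion $\Cov_X \subseteq \Loc_X$: given a geometric cover $f\colon Y\to X$, base change along such a $U\to X$ gives a geometric cover $Y_U\to U$ (both étaleness and the valuative criterion for properness are stable under base change). I would then show that geometric covers of $U$ are always constant, that is, of the form $\coprod_I U$. Decomposing along the connected components of $U$, one reduces to $U = \spec R$ with $R$ strictly henselian local; any connected component of $Y_U$ étale over such $R$ is finite étale, hence isomorphic to $U$ itself since the residue field is separably closed. The valuative criterion rules out further non-finite-étale components by preventing the loss of generizations. Hence $Y$ becomes a disjoint union of copies of $U$ after pulling back, which is exactly local constancy in the pro-étale topology.

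For the inclusion $\Loc_X \subseteq \Cov_X$: given $\Ff \in \Loc_X$, pro-étale locally on $X$ it is of the form $\underline{S}_U$, which is represented by the étale $U$-scheme $\coprod_S U$. This scheme is obviously étale over $U$ and satisfies the valuative criterion. One then has to glue these representing schemes along the pro-étale cover $U\to X$ into a single scheme $Y$ over $X$ representing $\Ff$. Both étaleness (which is fpqc-local on the target) and the valuative criterion (which can be checked after pro-étale base change, and is trivial for disjoint unions of copies of the base) are preserved under the descent.

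The main obstacle is the gluing step in the second inclusion: ensuring that $\Ff$ is representable by a genuine $X$-scheme and not merely by a pro-étale sheaf. This is precisely where the local topological noetherianness of $X$ enters, since without it one can construct locally constant pro-étale sheaves that fail to be representable, cf.\ \cite[Example 7.3.12]{proetale}. Once representability is secured, everything else follows cleanly from the explicit local form $\coprod_S U$.
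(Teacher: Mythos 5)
The paper gives no proof of this proposition; it simply cites \cite[Lemma~7.3.9]{proetale}, so your attempt must be measured against that source. Your two-inclusion strategy via w-contractible covers does match the structure of the cited argument, but there are substantive errors.

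In the first inclusion you say the valuative criterion prevents ``the loss of generizations.'' This is reversed: generizations always lift along a flat (hence étale) morphism by going-down; what the existence part of the valuative criterion of properness gives is that \emph{specializations} lift. That is exactly what rules out connected, non-finite étale pieces such as $\spec\operatorname{Frac}(R)\to\spec R$ for a henselian DVR $R$, so the sentence ``any connected component of $Y_U$ étale over such $R$ is finite étale'' should not be presented as a consequence of étaleness with the valuative criterion added only afterwards---the valuative criterion is doing all the work there. The clean argument over a strictly henselian local $\spec R$ is: the closed fibre is $\coprod_I\spec k(s)$, henselianness gives a section through each of these points with pairwise disjoint open images, and specialization-lifting plus connectedness shows these exhaust $Y_U$. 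You also gloss over the fact that a w-contractible $U$ typically has infinitely many connected components, none of them open, so the ``constant value'' of $Y_U$ may vary across components and the component decomposition is not itself a pro-étale cover; reconciling this into local constancy on $X$ requires local constancy of the fibre cardinality, which is where the locally topologically noetherian hypothesis (via \cite[Lemma~6.6.10]{proetale}, ensuring everything étale over $X$ is locally connected) already enters.

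For the second inclusion, the appeal to \cite[Example~7.3.12]{proetale} is off target: that example shows that $\Loc_X$ need not be of the form $G\set$ for a topological group $G$; it is not a counterexample to representability of locally constant sheaves by schemes. More importantly, the descent step is hand-waved at precisely the point that needs an argument: pro-étale descent of the local models $\coprod_S U$ is not automatic, since these are not affine over $U$ when $S$ is infinite, so one must explain why the glued object is a scheme (and not merely an fpqc sheaf or algebraic space) and why it is étale and satisfies the valuative criterion over $X$. Naming local topological noetherianness as ``where the hypothesis enters'' is correct in spirit, but as written the proposal does not supply the mechanism; it is in establishing representability, via the good clopen/connected-component behaviour of locally topologically noetherian schemes, that the hypothesis is actually used.
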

	
	As for both the topological and the étale fundamental groups, the pro-étale fundamental group is independent of the choice of basepoint, up to (usually non-canonical) isomorphism. This is an immediate corollary of the proof of \cite[Lemma 7.4.1]{proetale}, but we state it explicitly below, as we will need it later on.
	
	\begin{lemma}\label{independence of choice of basepoint}
		Let \(X\) be a locally topologically noetherian connected scheme, and \(x_1\), \(x_2\) two geometric points of \(X\). Then there is an isomorphism \[\ev_{x_1}\cong \ev_{x_2}\] of functors \(\Loc_X\to \Set\).
	\end{lemma}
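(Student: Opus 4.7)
The plan is to realise both fibre functors as evaluations of a common pro-object, following the proof of \cite[Lemma 7.4.1]{proetale}. First, I would construct a pro-étale universal cover at $x_1$: a cofiltered inverse system of connected pointed geometric covers $(\widetilde{X}_i,\tilde{x}_{1,i}) \to (X,x_1)$, cofinal among all such, whose formal limit $\widetilde{X} := \lim_i \widetilde{X}_i$ pro-represents $\ev_{x_1}$ in the sense that
\[\ev_{x_1}(\Ff) \cong \Hom_X(\widetilde{X}, \Ff)\]
for every $\Ff \in \Loc_X = \Cov_X$.

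The key step is to lift $x_2$ to $\widetilde{X}$. Each connected geometric cover $\widetilde{X}_i \to X$ has image that is open (by étaleness) and stable under specialisation (by the valuative criterion), hence equal to $X$ by connectedness, so the fibre $(\widetilde{X}_i)_{x_2}$ is a non-empty discrete set. After passing to a cofinal subsystem where the transition maps are surjective on $x_2$-fibres (achievable by making compatible choices of connected components above the chosen basepoints at each stage), the cofiltered limit of these fibres is non-empty, and picking any element produces a geometric point $\tilde{x}_2$ above $x_2$.

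The pair $(\widetilde{X}, \tilde{x}_2)$ then pro-represents $\ev_{x_2}$ by the same formula, so the identity of $\widetilde{X}$ induces a canonical natural transformation $\ev_{x_1} \to \ev_{x_2}$; running the same construction with the roles of $x_1$ and $x_2$ swapped, and using cofinality to compare, yields an inverse. The main obstacle is the non-emptiness of the cofiltered limit of fibres in the second step: unlike in the classical Galois-theoretic setting, the covers $\widetilde{X}_i$ need not be finite, so non-emptiness is not automatic from compactness, and one has to exploit the locally topologically noetherian hypothesis together with a careful cofinality argument to extract a subsystem with surjective transition maps on $x_2$-fibres.
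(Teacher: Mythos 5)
Your strategy---pro-represent $\ev_{x_1}$ by a cofiltered system of pointed connected covers and then lift $x_2$ through the tower---is not the route taken in the proof of \cite[Lemma 7.4.1]{proetale}, and the step you flag as the ``main obstacle'' is a genuine gap that your proposed remedy does not close. You are right that the transition maps $\widetilde{X}_j\to\widetilde{X}_i$ are automatically surjective (a map of nonempty connected geometric covers of $X$ is itself étale with image stable under specialisation, hence open and closed, hence surjective), so no passage to a cofinal subsystem is even needed for that. But surjectivity of the transitions is not enough: a cofiltered limit of nonempty sets with surjective transition maps can be empty once the index category is uncountable and the sets infinite, and both can occur here since geometric covers need not be finite. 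For instance, index by the finite subsets $F\subseteq\IR$ and let $S_F$ be the set of injections $F\hookrightarrow\IN$, with restriction as transition map; every $S_F$ is nonempty and every transition is surjective, yet $\varprojlim_F S_F=\varnothing$. There is no compactness to rescue this, in contrast with the profinite setting, and ``locally topologically noetherian'' does not by itself make the system countable or Mittag-Leffler.

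The argument in \cite[Lemma 7.4.1]{proetale} avoids the universal cover entirely. Since $X$ is connected and locally topologically noetherian, the images of $x_1$ and $x_2$ in $X$ can be joined by a finite zigzag of specialisations and generisations, reducing to the case where the image of $x_2$ is a specialisation of that of $x_1$. Both geometric points then lift to $\spec\Oo_{X,\bar x_2}^{\mathrm{sh}}$, and any $Y\in\Cov_X$ becomes a disjoint union of copies of $\spec\Oo_{X,\bar x_2}^{\mathrm{sh}}$ after base change to this strictly henselian local ring; this identifies the fibres $Y_{\bar x_1}$ and $Y_{\bar x_2}$ canonically with the set of connected components of $Y\times_X\spec\Oo_{X,\bar x_2}^{\mathrm{sh}}$, naturally in $Y$. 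Composing along the zigzag gives $\ev_{x_1}\cong\ev_{x_2}$. The non-emptiness of the $x_2$-fibre of your pro-universal cover is then a \emph{consequence} of the lemma, not something one can establish beforehand by a cofinality argument.
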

	
	Another property of locally topologically noetherian schemes, is that any étale morphism into such a scheme also has a locally topologically noetherian source. This was shown in \cite[Lemma 6.6.10]{proetale}, and it implies that any étale morphism into a locally topologically noetherian scheme is quasi-separated. We will use this fact without further mention in this paper.
	
	For the rest of this section, let us fix some locally topologically noetherian connected scheme \(X\), and a geometric basepoint \(x\) of \(X\). As the étale fundamental group \(\efg(X,x)\) is the automorphism group of the fibre functor \(\ev_x:\FEt_X\to \FSet\), there is a natural map \(\pefg(X,x)\to \efg(X,x)\). These two fundamental groups are closely related, and even the same for nice schemes:
	
	\begin{proposition}[{\cite[Lemma 7.4.3]{proetale}}]\label{efg vs pefg}
		The canonical continuous morphism \(\pefg(X,x)\to \efg(X,x)\) induces an isomorphism on profinite completions.
	\end{proposition}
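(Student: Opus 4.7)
The plan is to show that for every finite discrete group \(F\), precomposition with the canonical map \(\pefg(X,x)\to \efg(X,x)\) induces a bijection
\[\Hom_{\cont}(\efg(X,x),F) \xrightarrow{\sim} \Hom_{\cont}(\pefg(X,x),F).\]
Since \(\efg(X,x)\) is already profinite and continuous homomorphisms to finite groups factor through the profinite completion, this is equivalent to the induced map \(\widehat{\pefg(X,x)} \to \efg(X,x)\) being an isomorphism of profinite groups.

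To that end, I would first identify the essential image of the natural inclusion \(\FEt_X \into \Cov_X\). A finite étale cover is étale and satisfies the valuative criterion for properness, hence is a geometric cover; conversely, a geometric cover whose underlying morphism is finite is by definition finite étale. Via the Galois equivalences \(\FEt_X \simeq \efg(X,x)\text{-}\FSet\) and \(\Cov_X \simeq \pefg(X,x)\set\), this identifies \(\FEt_X\) with the full subcategory of \(\pefg(X,x)\set\) on finite discrete sets. Moreover, since \(\pefg(X,x)\to \efg(X,x)\) is obtained by restricting the fibre functor from \(\Loc_X\) to \(\FEt_X\), this identification intertwines it with the obvious forgetful functor \(\efg(X,x)\text{-}\FSet \to \pefg(X,x)\text{-}\FSet\).

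With this in hand, surjectivity of the \(\Hom\)-map above follows: any continuous \(\psi\colon \pefg(X,x)\to F\) makes \(F\) into a finite \(\pefg(X,x)\)-set by translation, which by the equivalence arises from a finite \(\efg(X,x)\)-set, yielding a factorization \(\bar\psi\colon \efg(X,x)\to F\) of \(\psi\). Injectivity is equivalent to the image of \(\pefg(X,x)\to \efg(X,x)\) being dense, i.e., to the surjectivity of \(\pefg(X,x)\to \efg(X,x)/N\) for every open normal subgroup \(N\); this holds because the corresponding connected finite Galois étale cover of \(X\) provides a transitive action of \(\pefg(X,x)\) on \(\efg(X,x)/N\). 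I do not expect a serious obstacle here; the only subtle point is carefully matching the finiteness conditions across the two Galois equivalences and checking that the canonical maps are identified, which is essentially formal once one unpacks the definitions.
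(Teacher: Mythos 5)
The paper does not prove this statement itself; it simply cites \cite[Lemma~7.4.3]{proetale}, whose argument your proposal essentially reconstructs: identify \(\FEt_X\) with the finite objects of \(\Cov_X\) and read off the consequence for fundamental groups. Two points deserve a word of care, though.

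First, the step ``this identifies \(\FEt_X\) with the full subcategory of \(\pefg(X,x)\set\) on finite discrete sets'' is not a definition-chase. You correctly note that geometric covers which are \emph{finite morphisms} are exactly the finite \'etale covers, but what you actually need is the stronger assertion that a geometric cover of the connected scheme \(X\) with \emph{finite fibre} over \(x\) is automatically a finite morphism. This uses that \(X\) is connected and locally topologically noetherian (so each connected component of the cover surjects onto \(X\), there are finitely many of them, and each is quasi-compact); it is precisely \cite[Lemma~7.4.1]{proetale} and should be invoked or proved rather than slipped past.

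Second, in the surjectivity argument the phrase ``yielding a factorization \(\bar\psi\colon\efg(X,x)\to F\)'' hides the content. The equivalence only hands you a continuous \emph{action} of \(\efg(X,x)\) on the finite \emph{set} \(F\) extending the \(\pefg\)-action; one must still check that \(h\mapsto h\cdot 1_F\) is a group homomorphism, i.e.\ that \(\efg(X,x)\) acts by left translations. This follows either by also transporting the commuting right \(F\)-translation action (so that \(F\) is carried as an \(F\)-torsor object, not merely a set), or by first establishing the density statement you prove in the next sentence and then using continuity, since the subgroup of elements acting by left translation is closed and contains the image of \(\pefg(X,x)\). Either fix is short, but as written the surjectivity paragraph silently presupposes one of them; if you go the density route you should prove density first.
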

	
	\begin{proposition}[{\cite[Lemma 7.4.10]{proetale}}]\label{pefg of normal schemes}
		If \(X\) is geometrically unibranch, the canonical morphism \(\pefg(X,x)\to \efg(X,x)\) is an isomorphism of topological groups.
	\end{proposition}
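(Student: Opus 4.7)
The plan is to reduce the proposition to showing that, for $X$ geometrically unibranch, every geometric cover of $X$ decomposes as a disjoint union of finite étale covers. Granting this, one obtains an equivalence between $\Cov_X$ and the category of arbitrary disjoint unions of finite étale covers of $X$, which under the standard equivalence $\FEt_X \cong \efg(X,x)\text{-}\FSet$ translates to an equivalence $\pefg(X,x)\set \cong \efg(X,x)\set$ compatible with fibre functors. Since $\efg(X,x)$ is profinite and hence a Noohi group, its category of continuous discrete sets together with the forgetful fibre functor determines it up to isomorphism of topological groups, so the comparison map $\pefg(X,x)\to \efg(X,x)$ is forced to be an isomorphism. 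This is consistent with Proposition \ref{efg vs pefg}, which already provides an isomorphism on profinite completions.

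The main geometric input is therefore the structural statement that each geometric cover of a geometrically unibranch scheme is a disjoint union of finite étale covers. I would reduce to showing that every connected component $Y_0$ of a geometric cover $Y\to X$ is finite étale over $X$. At a geometric point $\bar{x}$ of $X$, the strict henselization $\Oo_{X,\bar{x}}^{\mathrm{sh}}$ is irreducible — this is precisely the content of the geometrically unibranch hypothesis — so any étale scheme over $\spec\Oo_{X,\bar{x}}^{\mathrm{sh}}$ decomposes as a disjoint union of copies of $\spec\Oo_{X,\bar{x}}^{\mathrm{sh}}$. The valuative criterion of properness then ensures that specializations inside $X$ admit unique lifts to $Y_0$, so that $Y_0\to X$ meets each geometric fibre in a single orbit of the local monodromy.

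The main obstacle I expect is showing that this orbit is finite, i.e.~ruling out the possibility that infinitely many étale sheets form a single connected component. Morally, gluing distinct sheets across a specialization is forbidden by the unique lifting property coming from étaleness together with the valuative criterion, so a connected cover should have fibre cardinality bounded by the étale degree at any generic point. I would make this precise via a constructibility argument: over each generic point $\eta$ of $X$, the connected étale cover $Y_0\times_X\eta$ has finite degree since it is a Galois étale cover of the field $\kappa(\eta)$, and the geometrically unibranch hypothesis together with the valuative criterion propagate this finite cardinality to all of $X$. Once the fibres are known to be finite, $Y_0\to X$ is étale, separated, and universally closed, hence finite étale, completing the proof.
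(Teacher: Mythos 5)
The paper does not prove this proposition itself; it cites \cite[Lemma 7.4.10]{proetale}. Your overall plan — reduce to showing that every geometric cover of a geometrically unibranch scheme is a disjoint union of finite étale covers, so that $\Cov_X$ is generated under coproducts by $\FEt_X$ and the comparison map is forced to be an isomorphism — is the right strategy and is indeed the route the cited lemma takes.

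The difficulty is that the finiteness step, which you correctly single out as the main obstacle, is asserted rather than proved. Two concrete gaps. First, the connectedness of $Y_0\times_X\eta$ (and the word ``Galois'') is unjustified; it requires $Y_0$ to be irreducible, which does not follow from connectedness alone. One clean fix: replace $X$ by its normalization — a universal homeomorphism because $X$ is geometrically unibranch, so $\pefg(X,x)$ is unchanged by Proposition \ref{topological invariance} — after which $X$ is normal and irreducible, $Y_0$ is étale over a normal scheme and hence normal, and connected together with normal and locally (topologically) noetherian forces irreducibility. The fibre over $\eta$ is then a single point with residue field finite separable over $\kappa(\eta)$. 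Second, and more seriously, the sentence ``the geometrically unibranch hypothesis together with the valuative criterion propagate this finite cardinality to all of $X$'' restates the goal rather than proving it, and the appeal to a ``constructibility argument'' is too vague to carry the weight: constructibility of images and fibres is exactly what is unavailable until one knows the map is quasi-compact, which is the very thing that must be shown. The real content is a proof that a connected geometric cover over a geometrically unibranch base is quasi-compact; this needs genuine input — for instance an analysis of specialization maps on geometric fibres using the irreducibility of the strict henselizations $\Oo_{X,\bar x}^{\mathrm{sh}}$, or a factorization of $Y_0$ through the normalization of $X$ in the function field of $Y_0$ — and your sketch stops just short of supplying it.
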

	
	\begin{example}[{\cite[Paragraph before Definition 1.9]{proetale}}]\label{pefg of nodal curve}
		To show that the pro-étale fundamental group is a strictly finer invariant than the étale fundamental group, consider the nodal curve \(X\) over a separably closed field \(k\), obtained by identifying the points \(0\) and \(\infty\) in \(\IP^1_k\). The connected geometric covers of \(X\) are then given by gluing copies of \(\IP^1_k\) along their points at \(0\) and \(\infty\). In particular there is a universal connected cover \(X_\infty\), for which \(\Aut(X_\infty/X)\cong \IZ\). So in this case, we have \[\pefg(X,x)\cong \IZ \ncong \widehat{\IZ} \cong \efg(X,x).\]
	\end{example}
	
	\begin{remark}
		In the example above, the pro-étale fundamental group is prodiscrete, and hence agrees with the enlarged fundamental group introduced in \cite[Exp.~X.6]{sga3.2}.
		However, in general the pro-étale fundamental group is a strictly finer invariant, cf.~\cite[Lemma 7.4.6 and Example 7.4.9]{proetale}.
		Another example is the pushout \(\IG_{m,\IC} \coprod_{\spec \IC} \IG_{m,\IC}\), where both maps \(\spec \IC\to \IG_{m,\IC}\) are given by the unit map. This already appears in \cite[Example 4.5]{lara}, but will also follow from Theorem \ref{pefg of curves over separably closed field} as \(\efg(\IG_{m,\IC},x) \cong \widehat{\IZ}\) for any basepoint \(x\).
	\end{remark}
	
	Another useful property of the pro-étale fundamental group, is that just like the étale fundamental group, it is a topological invariant:
	
	\begin{proposition}[Topological invariance]\label{topological invariance}
		A universal homeomorphism of locally topologically noetherian connected schemes induces an isomorphism of pro-étale fundamental groups.
	\end{proposition}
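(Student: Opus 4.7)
The plan is to reduce the statement to the classical topological invariance of the small étale site: for a universal homeomorphism $f\colon Y\to X$, pullback induces an equivalence between étale $X$-schemes and étale $Y$-schemes. Combined with the identification of $\Loc_X$ with $\Cov_X$ recalled just above, it will suffice to show that this equivalence restricts to one $\Cov_X\cong\Cov_Y$, and is compatible with fibre functors.

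First I would verify that $f^*$ preserves the valuative criterion for properness. In the forward direction, given a geometric cover $g\colon Z\to X$ and compatible maps $\spec R\to Y$ and $\spec K\to Z\times_X Y$ from a valuation ring $R$ with fraction field $K$, composing with the projections yields compatible $\spec R\to X$ and $\spec K\to Z$; the valuative criterion for $g$ produces a unique lift $\spec R\to Z$, which together with $\spec R\to Y$ gives the desired $\spec R\to Z\times_X Y$. The reverse direction is analogous: any map $\spec R\to X$ from a valuation ring lifts, after a purely inseparable extension $R\subseteq R'$, to a map $\spec R'\to Y$ (since $\spec R\times_X Y\to\spec R$ is itself a universal homeomorphism), and a diagram chase combined with separatedness of étale maps transfers the valuative criterion back to $Z\to X$.

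Second I would lift the basepoint: if $x\colon\spec\Omega\to X$ is a geometric point, then $Y\times_X\spec\Omega\to\spec\Omega$ is a universal homeomorphism whose unique point has residue field $\Omega'$ that is purely inseparable over the separably closed field $\Omega$, and hence itself separably closed. The induced $y\colon\spec\Omega'\to Y$ is a geometric basepoint above $x$, and one has $\ev_x(\Ff)=\ev_y(f^*\Ff)$ naturally in $\Ff\in\Loc_X$. Passing to automorphism groups then yields an abstract isomorphism $\pefg(X,x)\cong\pefg(Y,y)$, which is a homeomorphism since the topology in Definition \ref{definition pefg} is determined by the compact-open topology on the action on each underlying set $\ev_*(\Ff)$, and these match under the correspondence.

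The main obstacle will be the bookkeeping around the valuative criterion when transferring geometric covers between $X$ and $Y$; once that is handled, the remainder is formal, following from the classical equivalence of étale sites and the compatibility of evaluation at geometric points with pullback.
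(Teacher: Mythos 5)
Your proof follows the paper's proof, which itself is only a sketch deferring to Lara's Proposition 2.17: both arguments rest on topological invariance of the small étale site together with the observation that separatedness and the existence part of the valuative criterion (and hence being a geometric cover) are preserved under a universal homeomorphism of the base. You simply flesh out the valuative-criterion transfer and the basepoint lift that the paper delegates to the reference, and these details are correct in outline (the descent of the lift from $\spec R'$ to $\spec R$ can be justified by applying topological invariance of the étale site once more to the universal homeomorphism $\spec R'\to\spec R$).
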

	
	This can easily be shown using topological invariance of the étale site, and the fact that separatedness and satisfying the existence part of the valuative criterion are topological properties. We refer to \cite[Proposition 2.17]{lara} for more details.
	
	As a last topic in this section, let us briefly mention the kind of groups that can appear. Throughout the rest of the paper, we will assume any topological group is Hausdorff. (Recall that this is equivalent to being a \(T_1\)-, or even a \(T_0\)-space. Indeed, in a \(T_0\)-group any point is closed, as translations and inversion are homeomorphisms. But then the diagonal is closed as the inverse image of the identity under the continuous map \(G\times G\to G:(x,y)\mapsto xy^{-1}\).)
	
	\begin{definition}
		Let \(G\) be a topological group, \(F_G\colon G\set\to \Set\) the forgetful functor, and consider the group \(\Aut(F_G)\), topologized similarly as in Definition \ref{definition pefg}. We say that \(G\) is \emph{Noohi} if the natural map \(G\to \Aut(F_G)\) is an isomorphism of topological groups.
	\end{definition}
	
	Since the equivalence \(\Loc_X\cong \pefg(X,x)\set\) is compatible with the fibre functors \(\ev_x\) and \(F_{\pefg(X,x)}\) to \(\Set\), any pro-étale fundamental group is Noohi.
	Below, a more intrinsic characterization of Noohi groups is given in terms of Raikov completeness and the Raikov completion \(G^*\) of a topological group \(G\), for which we refer to \cite[Section 3.6]{topgrps}. Note that the identity of any Noohi group has a basis of open neighbourhoods given by open subgroups, as this holds for groups of the form \(\Aut(S)\) with the compact-open topology, where \(S\) is a discrete set.
	
	\begin{proposition}[{\cite[Proposition 7.1.5]{proetale}}]\label{characterization of Noohi groups}
		If \(G\) is a topological group where the identity has a basis of open neighbourhoods given by open subgroups, then there is a natural isomorphism \(\Aut(F_G)\cong G^*\). In particular, \(G\) is Noohi if and only if \(G\) is Raikov complete.
	\end{proposition}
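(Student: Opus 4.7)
The plan is to exhibit the isomorphism as the unique continuous extension $\widetilde{\rho}\colon G^* \to \Aut(F_G)$ of the natural action map $\rho\colon G \to \Aut(F_G)$, $g \mapsto (S \mapsto (s \mapsto g\cdot s))$. For this, I would first check that $\rho$ is a continuous group homomorphism: the target carries the subspace topology from $\prod_S \mathrm{Sym}(S)$, basic open neighbourhoods of the identity are pointwise stabilizers of finitely many elements $s_1,\ldots,s_n$ in finitely many $G$-sets, and the preimage under $\rho$ of such a neighbourhood is the intersection of the stabilizers of the $s_i$ in $G$, which are open by continuity of the action. Next, $\Aut(F_G)$ is Raikov complete: it is a closed subgroup (cut out by the naturality equations) of $\prod_S \mathrm{Sym}(S)$, each factor is Raikov complete for the compact-open topology, and products and closed subgroups of Raikov complete groups are Raikov complete. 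The universal property of the Raikov completion then yields the extension $\widetilde{\rho}$.

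To show $\widetilde{\rho}$ is a bijection, I would argue as follows. For injectivity, if $\gamma\in G^*$ maps to the identity, then $\gamma$ fixes $eH \in G/H$ for every open $H\leq G$, so $\gamma$ lies in the closure $\overline{H}$ of $H$ inside $G^*$; since $G^*$ is Hausdorff and the open subgroups form a basis at the identity, $\bigcap_H \overline{H} = \{e\}$. For surjectivity, given $\alpha\in\Aut(F_G)$, naturality with respect to the canonical projections $G/H \to G/H'$ for $H\subseteq H'$ implies that $(\alpha_{G/H}(eH))_H$ is a compatible element of $\lim_H G/H$; choosing lifts $g_H\in G$ with $g_H H = \alpha_{G/H}(eH)$ produces a Cauchy net in $G$ whose limit $\gamma\in G^*$ should satisfy $\widetilde{\rho}(\gamma)=\alpha$, which is verified by decomposing an arbitrary continuous $G$-set into orbits $G/\mathrm{Stab}(s)$ and using naturality of $\alpha$ along the corresponding orbit maps.

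The main obstacle lies in this last verification, where one must reconstruct the whole permutation $\alpha_{G/H}$ from the single value $\alpha_{G/H}(eH)$: when $H$ is not normal, the $G$-equivariant endomorphisms of $G/H$ only correspond to $N_G(H)/H$, so one instead applies naturality of $\alpha$ along the $G$-equivariant map $G/H \to G/(xHx^{-1})$ induced by right translation by $x$, which sends $eH$ to $x\cdot xHx^{-1}$. This transports $\alpha_{G/H}(xH)$ to $\alpha_{G/(xHx^{-1})}(x\cdot xHx^{-1})$, reducing the computation of $\alpha_{G/H}(xH)$ to the already-known behaviour on the orbit $G/(xHx^{-1})$. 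Once $\widetilde{\rho}$ is known to be a continuous bijection, upgrading it to a homeomorphism is straightforward: the preimage of the pointwise stabilizer of $eH$ is exactly $\overline{H}\subseteq G^*$, and these closures form a basis of open subgroups at the identity in $G^*$, so $\widetilde{\rho}$ identifies canonical bases of open neighbourhoods on both sides.
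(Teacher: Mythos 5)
The paper itself does not prove this proposition (it is quoted from \cite[Proposition 7.1.5]{proetale}), so there is no internal proof to compare against; but your surjectivity argument has a genuine gap. The compatible system $(g_HH)_H$ makes the net $(g_H)$ \emph{left}-Cauchy: for $H,H'\subseteq H_0$ one has $g_H^{-1}g_{H'}\in H_0$. However, $G^*$ is complete only for the two-sided Raikov uniformity, so before speaking of ``the limit $\gamma\in G^*$'' you also need $g_Hg_{H'}^{-1}\in H_0$ eventually, and you never address this. This is not a technicality: as the paper observes in Section \ref{section torsors}, compatible systems in $\varprojlim_U G/U$ correspond exactly to elements of $\End(F_G)$, which in general strictly contains $\Aut(F_G)\cong G^*$ --- the example $G=\Aut(S)$ with $S$ an infinite discrete set is worked out there. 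Your construction never uses that $\alpha$ has an inverse, so as written it would equally ``prove'' the same conclusion with $\Aut(F_G)$ replaced by $\End(F_G)$, which is false; the invertibility of $\alpha$ must intervene, exactly in the spirit of Lemma \ref{no partial inverses}.

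A repair that fits the rest of your framework is to avoid the direct Cauchy-net construction and instead show that $\rho(G)$ is \emph{dense} in $\Aut(F_G)$: given $\alpha$ and a basic open neighbourhood prescribing $\alpha_{S_i}(s_i)$ for finitely many pairs, set $H=\bigcap_i\operatorname{Stab}(s_i)$, lift $\alpha_{G/H}(eH)=g_HH$, and apply naturality of $\alpha$ along the orbit maps $G/H\to S_i$, $eH\mapsto s_i$, to get $\alpha_{S_i}(s_i)=g_H\cdot s_i$, so $\rho(g_H)$ lies in the prescribed neighbourhood. Your last paragraph, which uses only injectivity, already shows that $\widetilde{\rho}$ matches the canonical bases of open subgroups and is hence a topological embedding; therefore $\widetilde{\rho}(G^*)$ is a Raikov-complete, hence closed, subgroup of the Hausdorff group $\Aut(F_G)$, and being dense it is everything. (The same orbit-map naturality also gives $\alpha_S(s)=g_{\operatorname{Stab}(s)}\cdot s$ for every $s\in S$ directly, which replaces the conjugate-subgroup computation you offer in your third paragraph for recovering $\alpha_{G/H}$ on non-identity cosets.)
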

	
	\begin{example}\label{locally profinite is noohi}
		Using this characterization, we see that any locally profinite group is Noohi.
	\end{example}
	
	\begin{remark}[{\cite[Example 7.2.6]{proetale}}]\label{coproducts of noohi groups}
		This characterization shows that the product of two Noohi groups is Noohi, so that the category of Noohi groups admits products. And while it is not true that the coproduct of Noohi groups is always Noohi, the category of Noohi groups does admit coproducts: for two Noohi groups \(G\) and \(H\), it is given by \(\Aut(F_{G*H})\), where \(G*H\) is the coproduct of topological groups, and we denote it by \(G*^NH\). Using infinite Galois theory, cf.~\cite[Theorem 7.2.5]{proetale}, one sees that \((G*H)\set\cong (G*^NH)\set\), compatibly with both forgetful functors. For a different description of the Noohi coproduct as the Raikov completion of \(G*H\) endowed with a certain topology (\emph{not} the coproduct topology), we refer to \cite[Corollary 1.14]{lavanda}.
	\end{remark}

	\section{Geometric covers of non-normal schemes}\label{section geometric covers}
	
	In this section, we show how to reduce the computation of the pro-étale fundamental group to the case of a normal scheme, where it agrees with the étale fundamental group by Proposition \ref{pefg of normal schemes}. A natural candidate for such a normal scheme is the normalization \(X^\nu\) of \(X\). The idea is then similar to Example \ref{pefg of nodal curve}: we would like show that any geometric cover of \(X\) can be obtained by gluing finite étale covers of \(X^\nu\). However, as not every scheme can be obtained by such a gluing process, we will start by approximating \(X\) by a scheme that can be obtained that way, and then show how the pro-étale fundamental groups of \(X\) and this other scheme relate. For this section, let us fix a locally topologically noetherian connected scheme \(X\), which we may assume to be reduced by Proposition \ref{topological invariance}. Moreover, since the gluing procedure mentioned above will be formalized using pushouts, which are mostly only well-behaved when one morphism is a closed immersion, we will also assume that the normal locus of \(X\) is open in \(X\), so that the non-normal locus is closed. This holds for example for all quasi-excellent schemes, and hence for all schemes locally of finite type over a field; cf.~\cite[§6.13]{ega4.2} for this and other criteria.
	
	\begin{construction}
		Consider the normalization map \(\pi\colon X^\nu\to X\), let \(Y\subseteq X\) be the non-normal locus, and \(Z:=\pi^{-1}(Y)\). We view both \(Y\) and \(Z\) as closed subschemes of respectively \(X\) and \(X^{\nu}\), with the reduced subscheme structure. Let us consider the pushout \(X^{\nu} \coprod_{Z} Y\), which exists by \cite[Théorème 7.1]{ferrand}, as \(Z\to X^{\nu}\) is a closed immersion, and \(Z\to Y\) is integral. By the pushout property, there is a natural map \(\phi\colon X^{\nu} \coprod_{Z} Y\to X\). While this map is not an isomorphism in general, it is always a universal homeomorphism:
	\end{construction}
	
	\begin{lemma}\label{pushout is universal homeomorphism}
		The natural map \(\phi\colon X^{\nu} \coprod_{Z} Y\to X\) is a universal homeomorphism.
	\end{lemma}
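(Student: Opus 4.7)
My plan is to verify the three standard criteria characterizing a universal homeomorphism: integrality, surjectivity, and universal injectivity (radiciality). Since all three properties are local on the target, I would begin by reducing to the affine case $X = \spec A$. Writing $A^\nu$ for the integral closure, and $I \subseteq A$, $J \subseteq A^\nu$ for the radical ideals defining $Y$ and $Z$ respectively, Ferrand's construction identifies the pushout as $P = \spec B$ with
\[ B = A^\nu \times_{A^\nu/J} A/I, \]
so that $\phi$ corresponds to the natural ring map $A \to B$, $a \mapsto (a, a \bmod I)$. Using that $\pi$ is integral and surjective, one also checks $J \cap A = I$, so $B$ sits canonically between $A$ and $A^\nu$.

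Next I would verify the three properties in turn. For integrality, any element $(a^\nu, \bar b) \in B$ is annihilated by the product of a monic polynomial in $A[T]$ satisfied by $a^\nu$ (available since $A^\nu$ is integral over $A$) and the polynomial $T - b$ for any lift $b \in A$ of $\bar b$. Surjectivity is immediate from the factorization $X^\nu \to P \to X$ of $\pi$, together with the fact that $\pi$ is already surjective (as $X$ is reduced). For radiciality, I would invoke the topological content of Ferrand's theorem, which identifies $|P|$ with the topological pushout $|X^\nu| \sqcup_{|Z|} |Y|$. Chasing the map $\phi$ through this description: for $x \notin Y$, the unique preimage in $P$ comes from the unique preimage of $x$ in $X^\nu$ (since $\pi$ is an isomorphism on the normal locus); for $x \in Y$, all preimages in $X^\nu$ collapse in the pushout onto the image of $x$ under the closed immersion $Y \hookrightarrow P$. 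In both cases the residue field at the preimage agrees with $k(x)$, since $\pi$ is an isomorphism on the normal locus and closed immersions preserve residue fields.

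The principal obstacle I anticipate is the bookkeeping around the fact that $Y$ and $Z$ carry the \emph{reduced} subscheme structure rather than the conductor structure, for which one would have the clean identity $A = A^\nu \times_{A^\nu/\mathfrak{c} A^\nu} A/\mathfrak{c}$ (where $\mathfrak{c}$ is the conductor). With the reduced structure one only gets an inclusion $A \hookrightarrow B$ whose cokernel is supported on $Y$, so $\phi$ is no longer an isomorphism but one has to show the discrepancy is radicial and integral; this is precisely what the topological argument above buys us. Some additional care is needed because we work in the possibly non-noetherian locally topologically noetherian setting where $A^\nu/A$ need not be finitely generated, but the monic-polynomial argument for integrality and the Ferrand-style description of $|P|$ both remain valid.
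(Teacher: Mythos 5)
Your proof is correct and follows essentially the same route as the paper: both invoke Ferrand's theorem for the topological and ring-theoretic description of the pushout, both reduce to the affine case, both appeal to the EGA IV 18.12.11 characterization of universal homeomorphisms, and both ultimately derive integrality from the fact that $B$ lies between $A$ and its integral closure $A^\nu$. Your explicit monic polynomial $f(T)(T-b)$ is a minor variant of the paper's observation that an $A$-subalgebra of the integral extension $A^\nu$ is automatically integral over $A$, and the paper additionally verifies affineness of $\phi$ before localizing, a step you fold implicitly into the appeal to Ferrand.
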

	\begin{proof}
		By \cite[Théorème 7.1]{ferrand}, the underlying topological space of \(X^\nu\coprod_Z Y\) is the pushout of the respective topological spaces, \(Y\to X^\nu\coprod_Z Y\) is a closed immersion, and \(X^\nu\to X^\nu\coprod_Z Y\) is an isomorphism away from \(Y\). In particular, \(\phi\) is bijective and induces isomorphisms on residue fields, so by \cite[Corollaire 18.12.11]{ega4.4} we are left to show that \(\phi\) is integral. To show that \(\phi\) is affine, we note that the normalization map \(\pi\) is affine, and for any affine open \(U\subseteq X\), the preimage \(\phi^{-1}(U)\) is the image of \( U^{\nu}=\pi^{-1}(U)\subseteq X^\nu\) in \(X^{\nu} \coprod_{Z} Y\), as \(X^\nu\to X^{\nu} \coprod_{Z} Y\) is surjective. But this image is just \(U^\nu\coprod_{Z_U} Y_U\), where \(Y_U:= Y\times_X U\) and \(Z_U:=Z\times_X U\) are both affine. And as the pushout of affine schemes is again affine (as the spectrum of the fibre product on the level of rings), \(\phi\) is also affine. So we may assume that \(X=\spec A\), and similarly \(X^{\nu} \coprod_{Z} Y=\spec A'\) and \(X^\nu=\spec A^\nu\), where we know that \(A\to A^\nu\) is integral. Now, \(A'\) is just the fibre product of \(A^\nu\) along an injection of rings, as \(Z\to Y\) is a dominant morphism of reduced schemes. And because fibre product of rings preserves injectivity, we see that \(A'\to A^\nu\) is injective. Since \(A^\nu\) is integral over \(A\), the same then holds for \(A'\), so that \(\phi\) is integral, and hence a universal homeomorphism.
	\end{proof}
	
	In particular, by Proposition \ref{topological invariance}, the pro-étale fundamental groups of \(X\) and \(X^\nu\coprod_Z Y\) coincide.
	
	\begin{remark}
		The reason we introduced the pushout \(X^{\nu} \coprod_{Z} Y\) is that, although it will still be non-normal in general, its singularities are better behaved than those of \(X\). And since the natural map \(X^\nu\to X^\nu\coprod_Z Y\) is surjective and birational, it is a normalization morphism, so that taking the pushout \(X^\nu\coprod_Z Y\) is a weaker process than normalizing. In fact, under certain assumptions on \(X\), we can show that \(X^\nu\coprod_Z Y\) is exactly the seminormalization of \(X\), cf.~Remark \ref{pushout is seminormalization for curves}.
	\end{remark}

	To simplify the notation, we will from now on write \(\hat{X}\) instead of \(X^\nu \coprod_Z Y\). Let us now describe the category of geometric covers of \(\hat{X}\), similarly to \cite[Tag 0ECL]{stacks}.
	
	\begin{theorem}\label{geometric covers of seminormal schemes}
		Base change induces an equivalence
		\[\Cov_{\hat{X}}\cong \Cov_{X^{\nu}}\times_{\Cov_Z} \Cov_Y.\]
	\end{theorem}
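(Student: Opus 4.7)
The plan is to deduce this from the gluing theory for \'etale schemes over a Ferrand pushout, followed by a separate verification that the valuative criterion for properness is preserved by gluing. Base change defines the candidate functor
\[\Phi\colon \Cov_{\hat{X}}\to \Cov_{X^\nu}\times_{\Cov_Z} \Cov_Y, \qquad W\mapsto \bigl(W\times_{\hat{X}} X^\nu,\ W\times_{\hat{X}} Y,\ \mathrm{can}\bigr),\]
where the canonical isomorphism on the further base changes to \(Z\) is automatic. This is well-defined, since both being \'etale and satisfying the valuative criterion for properness are stable under base change.

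By Ferrand \cite[Th\'eor\`eme 7.1]{ferrand} (and the discussion surrounding \cite[Tag 0ECL]{stacks}), the analogous functor on the full subcategories of \'etale schemes over \(\hat{X}\), \(X^\nu\) and \(Y\) is already an equivalence, with quasi-inverse sending \((U^\nu,V,\alpha)\) to the pushout \(W:=U^\nu \coprod_{U^\nu\times_{X^\nu} Z} V\) glued along \(\alpha\colon U^\nu\times_{X^\nu} Z\cong V\times_Y Z\). This immediately gives that \(\Phi\) is fully faithful, and reduces essential surjectivity to showing the following: if \(U^\nu\to X^\nu\) and \(V\to Y\) are geometric covers whose base changes to \(Z\) are identified via \(\alpha\), then the glued \'etale morphism \(W\to \hat{X}\) itself satisfies the valuative criterion for properness.

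For the remaining valuative criterion, given a valuation ring \(R\) with fraction field \(K\) and a commutative square consisting of \(\spec K\to W\) and \(\spec R\to \hat{X}\), I would split into cases according to the image of \(\spec R\) in \(\hat{X}\). If that image lies entirely in the closed subscheme \(Y\subseteq \hat{X}\), then the lift \(\spec K\to W\) factors through \(V\subseteq W\) by the pushout presentation of \(W\), and the valuative criterion for the geometric cover \(V\to Y\) supplies the unique extension. Otherwise the generic point of \(\spec R\) lands in the open complement \(\hat{X}\setminus Y\cong X^\nu\setminus Z\); then the main obstacle is to first lift \(\spec R\to \hat{X}\) to a morphism \(\spec R\to X^\nu\), after which the valuative criterion for \(U^\nu\to X^\nu\) produces \(\spec R\to U^\nu\to W\). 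This intermediate lift relies on the fact that \(X^\nu\to \hat{X}\) is integral, hence satisfies the valuative criterion for properness itself: affine-locally, \(\hat{X}=\spec(A^\nu\times_B C)\), and since \(A\to A^\nu\) is integral and factors through \(A^\nu\times_B C\), so is \(A^\nu\times_B C\to A^\nu\). Uniqueness in both cases follows from the corresponding uniqueness statements for \(U^\nu\to X^\nu\) and \(V\to Y\).
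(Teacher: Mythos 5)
Your proposal is correct and takes essentially the same route as the paper: both invert base change via the Ferrand pushout of \'etale schemes and reduce the problem to the existence part of the valuative criterion for the glued morphism, handled by the same case split on whether $\spec K$ lands in $Y$, using integrality of $X^\nu\to\hat{X}$ (resp.\ the closed immersion $Y\to\hat{X}$) to produce the intermediate lift. The paper is only slightly more explicit, spelling out why the pushout of \'etale schemes exists and citing \cite[Tag 0ECK]{stacks} for separatedness of the glued morphism (which handles uniqueness of the valuative lift more cleanly than tracing it back through $U^\nu\to X^\nu$ and $V\to Y$ as you sketch), but these are presentational, not methodological, differences.
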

	Recall from the introduction that one can view this 2-fibre product as a descent category for the covering \(X^\nu\coprod Y\to X\), and that no cocycle condition is needed as \(Y\to X\) is a monomorphism.
	\begin{proof}
		First, we note that \(\Cov_{X^{\nu}}\times_{\Cov_Z} \Cov_Y\) can be identified with the category \(\Cc\) of diagrams of the form
		\[\begin{tikzcd}
			X' \arrow[d, "f"] & Z' \arrow[d, "g"] \arrow[r,"j'"] \arrow[l, "i'"'] & Y' \arrow[d,"h"]\\
			X^\nu & Z \arrow[r, "j"'] \arrow[l, "i"] & Y,
		\end{tikzcd}\]
		where the vertical morphisms are geometric covers and the two squares cartesian, with the obvious morphisms between them. So it is enough to find an equivalence \(\Cov_{\hat{X}}\cong \Cc\). For a geometric cover \(W\to \hat{X}\), let \(X':=X^\nu \times_{\hat{X}} W\), and similarly \(Y':= Y\times_{\hat{X}} W\) and \(Z':= Z\times_{\hat{X}} W\). With the obvious morphisms, this gives a diagram in \(\Cc\), since being a geometric cover is stable under base change, and because we have \[X' \times_{X^\nu} Z = W\times_{\hat{X}} X^\nu \times_{X^\nu} Z \cong W\times_{\hat{X}} Z = Z',\] and similarly for \(Y'\).
		
		Conversely, if we have a such a diagram in \(\Cc\), we want to show the pushout \(X'\coprod_{Z'} Y'\) exists, and is a geometric cover of \(X^\nu \coprod_Z Y=\hat{X}\). We will show this pushout exists under the additional assumption that \(\pi\colon X^\nu\to X\) is finite, and refer to the first paragraph of the proof of \cite[Tag 0ECK]{stacks} for the general case. By \cite[Théorème 7.1]{ferrand}, we have to show that for any point \(y'\in Y'\), there is an open affine \(U'\subseteq X'\) such that \(j'^{-1}(y')\) is contained in \(U'\). If the three vertical maps \(f\), \(g\) and \(h\) in our diagram are the identity, this holds because the normalization map is affine. Otherwise, note that as \(X^\nu\) is normal, \(X'\) is the disjoint union of finite étale covers of \(X^\nu\). And by our assumption that the normalization map is finite, \(Z'\to Y'\) is finite as well, so that \(y'\) only has finitely many preimages in \(Z'\), which are contained in the disjoint union \(X''\) of finitely many components of \(X'\), each of which is finite étale over \(X^\nu\). So we can consider the image \(y=h(y')\), an affine open \(U\subseteq X^\nu\) containing \(j^{-1}(y)\), and then the inverse image \(f_{\mid X''}^{-1}(U)\subseteq X''\) is the open affine we are looking for.
		
		Using \cite[Tag 08KQ]{stacks} (resp. \cite[Tag 0ECK]{stacks}), we find that \(X'\coprod_{Z'} Y'\to X^\nu\coprod_Z Y\) is étale (resp. separated), so we are left to show the existence part of the valuative criterion. Fix a valuation ring \(V\) with fraction field \(K\), and consider a commutative diagram as follows:
		\[\begin{tikzcd}
			\spec K \arrow[d] \arrow[r, dotted] \arrow[rr, bend left] & X' \arrow[d] \arrow[r] & X' \coprod_{Z'} Y'\arrow[d] & Y' \arrow[l] \arrow[d]\\
			\spec V \arrow[r, dotted] \arrow[rr, bend right] & X^\nu \arrow[r]& \hat{X} & Y\arrow[l],
		\end{tikzcd}\]
		considering only the solid arrows at first. If the image of \(\spec K\) in \(X'\coprod_{Z'} Y'\) is in the normal locus, we can uniquely lift this map to a map \(\spec K\to X'\), and compose it to get a map \(\spec K\to X^\nu\). Since \(X^\nu\to \hat{X}\) is a normalization map, it is integral, and hence universally closed. In particular, it satisifies the existence part of the valuative criterion, so we can lift \(\spec V\to \hat{X}\) to a map \(\spec V\to X^\nu\), and we get two dotted arrows as in the diagram. Since \(X'\to X^\nu\) is a geometric cover by assumption, we also get a lift \(\spec V\to X'\), which we can compose with \(X'\to X'\coprod_{Z'} Y'\) to get the desired lift. On the other hand, if the image of \(\spec K\to X'\coprod_{Z'} Y'\) is in the non-normal locus, we can uniquely lift this map to \(\spec K\to Y'\). And since \(Y\to \hat{X}\) is a closed immersion, the same argument as above gives a lift \(\spec V\to X'\coprod_{Z'} Y'\). We conclude that \(X'\coprod_{Z'} Y'\to \hat{X}\) is a geometric cover.
		
		Both mappings on objects of \(\Cc\) and \(\Cov_{\hat{X}}\) can be upgraded to functors, and we want to show they are mutual quasi-inverses. We will only check this on objects, as once we know this the case of morphisms is easy. If we have a diagram \(D\) in \(\Cc\), take the associated geometric cover, and then the diagram in \(\Cc\) obtained by fibre products, this new diagram will be isomorphic to \(D\) by \cite[Tag 07RU]{stacks} and cartesianness of the squares of \(D\).
		
		Conversely, let \(W\) be a geometric cover of \(\hat{X}\), and construct \(X'\), \(Y'\) and \(Z'\) as above. There is a natural map \(X'\coprod_{Z'} Y'\to W\), and we can check locally that it is an isomorphism. To do this, let \(A\to C \leftarrow B\) be a fibre diagram of rings, and let \(M\) be a flat ring over \(A\times_C B\). Then we have an exact sequence \(0\to A\times_C B \to A\oplus B \to C\) of modules. Tensoring this with \(M\) over \(A\times_C B\) gives another exact sequence, which realizes \(M\) as the fibre product of \(M\otimes_{A\times_C B} A\) and \(M\otimes_{A\times_C B} B\) over \(M\otimes_{A\times_C B} C\). Since this is just the affine version of the construction above, we are done.
	\end{proof}

	\section{Computing the pro-étale fundamental group}\label{section computing}
	
	Using Theorem \ref{geometric covers of seminormal schemes} to compute \(\pefg(X,x)\) can still be difficult if \(Y\) and \(Z\) have complicated geometric covers. This happens to a lesser extent if \(Y\) and \(Z\) consist of points, such as when \(X\) is a curve, and we will see that if \(X\) is moreover defined over a separably closed field, we will actually be able to find a formula for \(\pefg(X,x)\). Note that we will not need any separatedness or irreducibility assumptions on \(X\), and by topological invariance we do not lose any generality by assuming reducedness.
	
	\begin{assumption}
		For this section, assume \(X\) is connected, reduced, of finite type over a field \(k\), and that its non-normal locus is zero-dimensional, i.e., consists of finitely many closed points.
	\end{assumption}
	
	\begin{remark}\label{pushout is seminormalization for curves}
		Under these assumptions, let us show that \(X^{\nu}\coprod_Z Y\) is exactly the seminormalization of \(X\). Recall that this seminormalization is the initial scheme \(X^s\) with a universal homeomorphism to \(X\) that induces isomorphisms on residue fields. So we immediately get a morphism \(\alpha\colon X^s\to X^{\nu}\coprod_Z Y\). On the other hand, because of our assumption on \(X\), we also have a natural morphism \(Y\to X^{s}\) such that the compositions \(Z\to Y\to X^s\) and \(Z\to X^{\nu} \to X^s\) agree (recall that the normalization always factors through the seminormalization, so that we also have a map \(X^{\nu}\to X^s\)). The pushout property then gives a map \(\beta\colon X^{\nu}\coprod_Z Y\to X^s\). Since maps obtained by universal properties are unique, the compositions \(\alpha\circ \beta\) and \(\beta\circ \alpha\) must be the identity maps, so that \(\alpha\) and \(\beta\) are mutual inverses.
	\end{remark}
	
	Let us recall the notion of \emph{(geometric) branches} of a point of a scheme:
	
	\begin{definition}
		Let \(T\) be a scheme which locally has finitely many irreducible components, and \(\pi\colon T^\nu\to T\) its normalization. For a point \(t\in T\), one defines:
		\begin{enumerate}
			\item The number of \emph{branches} of \(T\) at \(t\) is the number of inverse images of \(t\) in \(T^\nu\).
			\item The number of \emph{geometric branches} of \(T\) at \(t\) is \(\sum_{t^\nu\in \pi^{-1}(t)} [k(t^\nu):k(t)]_{\text{sep}}\).
		\end{enumerate}
	\end{definition}
	
	Clearly, for schemes of finite type over a separably closed field, the two definitions agree. Note also that if we denote the number of geometric branches at \(x\in X\) by \(b_x\), then by the assumption on our scheme \(X\), the number of points \(x\in X\) for which \(b_x>1\) is finite. In particular, we can give sense to the infinite sum \(\sum_{x\in X} (b_x-1)\). Finally, note that since \(X^\nu\) is the normalization of both \(X\) and \(\hat{X}\), and because \(\hat{X}\to X\) is a homeomorphism which induces isomorphisms on residue fields, the number of (geometric) branches of \(X\) at some point agrees with the number of (geometric) branches at the corresponding point of \(\hat{X}\). 
	
	These numbers of branches allow us to find a nice formula for the pro-étale fundamental group of \(X\), which we prove in the following theorem. This generalizes a result obtained in \cite[Proposition 1.17]{lavanda} for the case of projective normal crossing curves. It is also similar to \cite[Theorem 2.27]{lara2}; we note that both results were obtained independently. 
	The picture to keep in mind is that of the fundamental group of a graph of groups.
	Indeed, to \(X\) we can attach a graph of groups, where the vertices correspond to the irreducible components of \(X\), equipped with the étale fundamental group of their normalization, and points on \(X\) with multiple branches give rise to edges, equipped with the trivial group.
	Then the formula below is similar to \cite[Example 1 on p.~43]{serretrees}, adapted to the setting of Noohi groups.
	Similar computations have also appeared in \cite[Exp.~XI, Corollaire 5.4]{sga1} (and more generally in \cite{stix}) for the étale fundamental group, and a specific example can be found in \cite[Example 3.25]{lara}.
	
	Recall that \(*^N\) denotes the coproduct of Noohi groups, as in Remark \ref{coproducts of noohi groups}.

	\begin{theorem}\label{pefg of curves over separably closed field}
		Assume \(k\) is separably closed, and let \(X_0,\ldots,X_m\) be the irreducible components of \(X\). Choose geometric basepoints \(x_i\) of \(X_i\) with closed image in the normal locus of \(X\), for each \(i\). Then there is an isomorphism 
		\[\pefg(X,x_0) \cong F_{b(X)-m} *^N \bigast_{i=0,\ldots,m}^{\quad N} \efg(X_i^\nu,x_i),\]
		with \(F_{b(X)-m}\) a discrete free group on \(b(X)-m\) generators, and \(b(X)=\sum_{x\in X} (b_x-1)\).
	\end{theorem}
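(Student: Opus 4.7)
The plan is to combine Theorem \ref{geometric covers of seminormal schemes} with an explicit combinatorial description of the resulting $2$-fibre product as $G\set$ for the proposed Noohi group $G$. By topological invariance (Proposition \ref{topological invariance}) together with Lemma \ref{pushout is universal homeomorphism}, I may replace $X$ by $\hat{X} = X^\nu \coprod_Z Y$, so the task becomes to identify $\pefg(\hat{X}, x_0)$. Theorem \ref{geometric covers of seminormal schemes} gives $\Cov_{\hat{X}} \cong \Cov_{X^\nu} \times_{\Cov_Z} \Cov_Y$. Since $k$ is separably closed and $Y$, $Z$ are zero-dimensional and reduced, $\Cov_Y$ and $\Cov_Z$ are just products of $\Set$ indexed by their closed points; and since $X^\nu = \coprod_i X_i^\nu$ with each $X_i^\nu$ normal (hence geometrically unibranch), Proposition \ref{pefg of normal schemes} gives $\Cov_{X_i^\nu} \simeq \efg(X_i^\nu, x_i)\set$.

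To see the free factor $F_{b(X)-m}$ emerge, I would associate to $X$ a finite connected graph $\Gamma$ with vertex set $\{v_0,\ldots,v_m\}\cup\{w_y : y\in Y\}$ and, for each $z \in Z$, an edge joining $w_{\pi(z)}$ with $v_{i(z)}$, where $i(z)$ indexes the component of $X^\nu$ containing $z$. The graph $\Gamma$ is connected because $\hat{X}$ is, and a direct Euler-characteristic count gives first Betti number $|Z| - m - |Y| = \sum_{y}(b_y-1) - m = b(X) - m$. Fix a spanning tree $T$ of $\Gamma$ together with, for each $z \in Z$, a choice of étale path in $X_{i(z)}^\nu$ from $x_{i(z)}$ to $z$ in the sense of Lemma \ref{independence of choice of basepoint}.

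Using these choices, I would repackage a geometric cover of $\hat{X}$ as a tuple $(S, \rho_0,\ldots,\rho_m, \phi_1,\ldots,\phi_{b(X)-m})$ consisting of a single discrete set $S$ (the fibre at $x_0$), continuous actions $\rho_i\colon\efg(X_i^\nu,x_i)\to\Aut(S)$, and bijections $\phi_j\in\Aut(S)$, one for each edge of $\Gamma$ not in $T$. Here the sets $T_y$ from the fibre-product description are eliminated by the distinguished $T$-edge at each $w_y$, the sets $S_i$ for $i\neq 0$ are identified with $S$ using the unique $T$-path from $v_i$ to $v_0$, and the gluings along $T$-edges become identities by construction; the non-tree edges contribute the $b(X)-m$ free parameters. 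Conversely, any such tuple reconstructs a compatible system of covers glued across $Z$. This exhibits $\Cov_{\hat{X}}$ as the category of continuous discrete actions of the abstract coproduct $F_{b(X)-m} * \bigast_i \efg(X_i^\nu,x_i)$, and by Remark \ref{coproducts of noohi groups} this is the same, compatibly with forgetful functors, as $G\set$ for $G := F_{b(X)-m} *^N \bigast_{i=0,\ldots,m}^{\quad N} \efg(X_i^\nu,x_i)$. Matching the forgetful functor with $\ev_{x_0}$ and invoking infinite Galois theory then yields $\pefg(\hat{X},x_0)\cong G$.

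The main obstacle is the middle step of identifying the $2$-fibre product category with $G\set$: the bookkeeping is non-trivial and is essentially the Bass--Serre correspondence for a graph of groups with trivial edge groups. One must check carefully that the chosen spanning tree and étale paths eliminate the data $(T_y, S_{i\neq 0})$ without information loss, leaving exactly $b(X)-m$ independent gluing bijections; that the resulting $G$-action is continuous (using that each $\efg(X_i^\nu, x_i)$ is profinite while $F_{b(X)-m}$ is discrete); and that passing from the abstract coproduct to the Noohi coproduct is harmless at the level of $\set$-categories but is crucial for the final group to be Noohi, matching the intrinsic Noohi-ness of $\pefg$.
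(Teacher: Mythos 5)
Your proposal follows essentially the same path as the paper's proof: replace $X$ by $\hat{X}$, apply Theorem \ref{geometric covers of seminormal schemes}, and then unwind the $2$-fibre product into data giving a continuous action of the coproduct on a single set. The only genuine difference is organizational: you introduce the bipartite incidence graph $\Gamma$ and fix a spanning tree up front, whereas the paper achieves the same thing implicitly by reordering the components so that each $\hat{X}_i$ ($i>0$) meets an earlier one, choosing one pair of branches $y_i^i,y_i^j$ for each such $i$, and one branch $x^\nu$ for each remaining non-normal point — these choices are precisely a spanning tree of your $\Gamma$, and the set $T$ of unchosen branches is precisely the set of non-tree edges, of cardinality $b(X)-m$. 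Your explicit graph-theoretic framing (which the paper only mentions informally in the remark preceding the theorem, with the less precise "points give rise to edges") makes the Euler-characteristic count and the elimination of the $T_y$'s and $S_i$'s transparent, and makes the Bass--Serre analogy exact rather than just a guiding picture; the paper's choice-based bookkeeping is more self-contained but forces the reader to recognize the spanning tree after the fact. Both proofs leave the continuity check and the passage from the abstract to the Noohi coproduct (via Remark \ref{coproducts of noohi groups} and infinite Galois theory) to the same observation you make at the end, so I see no gap in your sketch — only that, as you note, the middle bookkeeping step has to be carried out with some care, exactly as the paper does.
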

	
	\begin{proof}
		Since we are working over a separably closed field, we can simplify our notation and not distinguish between a geometric point and its closed image. Similarly, we will not distinguish a point of a scheme from its image under a closed immersion.
		
		By Proposition \ref{topological invariance} and Lemma \ref{pushout is universal homeomorphism}, we can replace \(X\) by \(\hat{X}\). By Remark \ref{coproducts of noohi groups}, it is enough to show \[\Cov_{\hat{X}} \cong \big(F_{b(X)-m}* \bigast_{i=0,\ldots,m} \efg(X_i^\nu,x_i)\big)\set.\] By Theorem \ref{geometric covers of seminormal schemes}, we can instead show that \(\big(F_{b(X)-m}* \bigast_{i=0,\ldots,m} \efg(X_i^\nu,x_i)\big)\set\) is equivalent to \(\Cc\), where \(\Cc\) is the category of diagrams appearing in the proof of the aforementioned theorem. To do this, we make the following choices, using Lemma \ref{independence of choice of basepoint}:
		\begin{itemize}
			\item For each \(z\in Z\subseteq X^\nu\) (i.e., for each \(z\in X^\nu\) whose image in \(\hat{X}\) is not normal), fix a natural isomorphism \(F_z\colon \ev_{x_i}\xrightarrow{\cong} \ev_z\) of functors \(\Cov_{X_i^\nu}\to \Set\), where \(i\) is the index such that \(z\in X_i^\nu\). By base change, this gives a natural isomorphism \(\overline{F}_z\colon \ev_{x_i}\xrightarrow{\cong} \ev_{\overline{z}}\) of functors \(\Cov_{\hat{X}}\to \Set\), with \(\overline{z}\) the image of \(z\) in \(\hat{X}\).
			\item Rearranging the indices of the irreducible components of \(\hat{X}\), we may assume that for each \(i=1,\ldots,m\), there is some \(j<i\) such that \(\hat{X}_i\cap \hat{X}_j\neq \varnothing\). For any index \(i\), fix some point \(y_i\) in such an intersection, and some points \(y_i^i\in X_i^\nu\) and \(y_i^j\in X_j^\nu\) lying above it. In particular, we get natural isomorphisms \(F_i\colon \ev_{x_0}\cong \ev_{x_i}\) of functors \(\Cov_{\hat{X}}\to \Set\), by composing the isomorphisms
			\[\ev_{x_0} \to \ldots \to \ev_{x_j} \xrightarrow{\overline{F}_{y_i^j}} \ev_{y_i} \xrightarrow{\overline{F}^{-1}_{y_i^i}} \ev_{x_i}.\]
			\item For each \(x\in \hat{X}\) with \(b_x>1\) that is not one of the \(y_i\)'s, choose some \(x^\nu\in X^\nu\) over \(x\). And to simplify the notation later on, let us denote \(y_i^j\) by \(y_i^\nu\), where \(j<i\).
			\item Finally, we fix a set of free generators \(T\) of \(F_{b(X)-m}\), and an identification of \(T\) with the set of \(z\in Z\) which are not one of the choices for \(y_i^i\), \(y_i^j\) or \(x^\nu\) made above. (Note that the number of such \(z\in Z\) is exactly \(b(X)-m\).)
		\end{itemize}
		Now, supppose we have a diagram
		\[D=\begin{tikzcd}[row sep=small]
			X' \arrow[d] & Z' \arrow[d] \arrow[r] \arrow[l] & Y' \arrow[d]\\
			X^\nu & Z \arrow[r] \arrow[l] & Y,
		\end{tikzcd}\]
		in \(\Cc\), and denote the preimage of \(X_i^\nu\) in \(X'\) by \(X'_i\). Since \(x_0\) lies in the normal locus of \(\hat{X}\), the fibre functor \(F_{\Cc}\) of \(\Cc\) (corresponding to the fibre functor \(\ev_{x_0}\) of \(\Cov_{\hat{X}}\) under the equivalence \(\Cc\cong \Cov_{\hat{X}}\)) sends this diagram to the set of points of \(X'\) over \(x_0\). Considering only \(X'_0\), we see that \(F_{\Cc}\) admits an obvious action by \[\pefg(X_0^\nu,x_0)\cong \efg(X_0^\nu,x_0).\] Similarly, using the fixed isomorphisms \(F_i\colon \ev_{x_0}\cong \ev_{x_i}\), we obtain actions of \[\pefg(X_i^\nu,x_i)\cong \efg(X_i^\nu,x_i)\] on \(F_{\Cc}\). To define an action of \(F_{b(X)-m}\) as well, it is enough to specify \(b(X)-m\) automorphisms of the functor \(F_{\Cc}\). For each \(z\in T\) in \(X_i^{\nu}\) with image \(w\in \hat{X_i}\), we define the following automorphism \(t_z\): 
		for a point \(p\in F_{\Cc}(D)\), consider \(F_z(X'_i)\circ F_i(X')(p)\). This is a point of \(Z'\) lying over \(z\in Z\). Let \(q\) be the unique point of \(Z'\) which gets mapped to the same point in \(Y'\), but also gets mapped to \(w^\nu\in Z\). Then we define \(t_z(p)=F_j(X')^{-1} \circ F_{w^\nu}(X'_j)^{-1}(q)\), where \(j\) is the index such that \(w^\nu\in X_j^\nu\).
		This defines a natural automorphism of \(F_{\Cc}\), as \(F_i\), \(F_j\), \(F_z\) and \(F_{w^\nu}\) are natural isomorphisms, and because the morphisms in \(\Cc\) are induced by morphisms of \(\Cov_{X^\nu}\), \(\Cov_Y\) and \(\Cov_Z\). (Note the slight abuse of notation we use by composing \(F_{z}\) and \(F_i\), but since \(x_i\) lies in the normal locus of \(\hat{X}\), this does not lead to any problems.)
		
		Conversely, let \(S\) be a set with a continuous \(\big(F_{b(X)-m}* \bigast_{i=0,\ldots,m} \efg(X_i^\nu,x_i)\big)\)-action. Consider, for each \(i\), the set \(S\) with the restricted \(\efg(X_i^\nu,x_i)\cong \pefg(X_i^\nu,x_i)\)-action. By the equivalence \(\pefg(X_i^\nu,x_i)\set \cong \Cov_{X_i^\nu}\) we get a geometric cover \(X'_i\) of \(X_i^\nu\), and we define \(X':=\bigsqcup_{i=0}^m X'_i\) and \(Z':=X'\times_{X^n} Z\). Since we are working over a separably closed field, both \(Z\) and \(Z'\) are disjoint unions of copies of \(\spec k\), and we have \(Z'\cong \coprod_S Z\). So to get a diagram in \(\Cc\), we have to take \(Y'=\coprod_S Y\to Y\), and we are left to determine the map \(Z'\to Y'\), i.e., how points of \(Z'\subseteq X'\) are glued together.
		
		We start by gluing \(X'_i\)'s and \(X'_j\)'s together, for \(i\neq j\). We do this inductively on \(i=1,\ldots,m\). For \(i=1\), fix an identification of \(S\) with both the points of \(X'_0\) lying over \(x_0\) and the points of \(X'_1\) lying over \(x_1\), compatibly with the actions of \(\efg(X_0^\nu,x_0)\) and \(\efg(X_1^\nu,x_1)\) respectively. We then map, for each \(s\in S\), the points \(F_{y_1^0}(X'_0)(s)\) and \(F_{y_1^1}(X'_1)(s)\) to the same point in \(Y'\) (which maps to the right point in \(Y\)), using the identification we just made. This determines the images of points lying over \(y_1^0\) and \(y_1^1\), and we can repeat this process for \(i>1\).
		
		For each \(x\in \hat{X}\) with \(b_x>1\) that was not of the form \(y_i\), we had fixed some \(x^\nu\in X^\nu\) lying over \(x\). Points of \(Z'\) lying over these \(x^\nu\) do not need to be glued to the points considered in the previous paragraph, so one can simply choose their image in \(Y'\) (with correct image in \(Y\)), such that they all map to distinct points.
		
		Finally, consider some \(z\in T\) with \(z\in X_l^\nu\). Then \(z\) is not of the form \(y_i^i\), \(y_i^j\) or \(x^\nu\). But any point \(z'\) lying over \(z\) must be glued together with exactly one point lying over some \(x^\nu\) or some \(y_i^j=y_i^\nu\) (let us denote both by \(x^\nu\in X_j^\nu\)), and the images in \(Y'\) of these points are already determined. To determine to which point \(z'\) must be glued, consider the automorphism \(t_z\) on \(S\) associated to \(z\in T\subseteq F_{b(X)-m}\). Looking at the way that we constructed an automorphism associated to \(z\) before, we see that \(z'\) must be glued to the point \(F_{x^\nu}(X'_j) \circ F_j(X') \circ t_z \circ F_l(X')^{-1} \circ F_z(X'_l)^{-1}(z')\).
		
		The constructions above can naturally be upgraded to functors, which are readily seen to be mutual quasi-inverses. This concludes the proof.
	\end{proof}
	
	\begin{remark}
		In a certain sense, the elements of \(F_{b(X)}\subseteq \pefg(X,x_0)\) correspond to loops in \(X\), which recovers some of the topological intuition for fundamental groups.
	\end{remark}
	
	\begin{remark}
		Even if \(X\) is irreducible, it is relatively difficult to determine \(\efg(X^\nu,x)*^NF_{b(X)}\) completely: if \(\efg(X^\nu,x)\) is infinite profinite, then \(\efg(X^\nu,x)*F_{b(X)}\) will have some non-discrete topology, so it might not be Noohi. However, we do know that the natural map \[\efg(X^\nu,x)*F_{b(X)}\to \efg(X^\nu,x)*^N F_{b(X)}\] is injective. 
		Indeed, since the latter is the automorphism group of \(\Cov_{\hat{X}}\), we have to show that any non-trivial element \(\psi\in \efg(X^\nu,x)*F_{b(X)}\) acts non-trivially on the image under \(\ev_x\) of some geometric cover of \(\hat{X}\). To construct such a cover, consider the (unique) reduced decomposition of \(\psi\) as a product of elements in \(\efg(X^\nu,x)\) and in \(F_{b(X)}\), and let \(Y\to X^\nu\) be a connected finite étale cover on which some element of \(\efg(X^\nu,x)\) appearing in the decomposition of \(\psi\) acts non-trivially. (This is always possible unless \(\psi\) is in the image of \[F_{b(X)}\to \efg(X^\nu,x)*F_{b(X)}.\] In that case, taking \(Y=X^\nu\) will suffice, as we assumed \(\psi\) was non-trivial.) Let \(Y_\infty\) be the universal connected geometric cover of \(\hat{X}\) whose irreducible components are \(Y\). It is then clear that \(\psi\) acts non-trivially on \(\ev_x(Y_\infty)\).
	\end{remark}
	
	\begin{remark}
		Simarly as for the étale fundamental group, there is a fundamental exact sequence for the pro-étale fundamental group, cf.~\cite[Theorem 4.14]{lara}: if \(Y\) is a geometrically connected scheme of finite type over \(k\), then there is a short exact sequence
		\[1\to \pefg(Y_{k^{\text{sep}}})\to \pefg(Y)\to \Gal(k^{\text{sep}}/k)\to 1\]
		of topological groups. In particular, using this together with our Theorem \ref{pefg of curves over separably closed field}, we can determine the pro-étale fundamental group of a geometrically connected curve, up to some extension problem. For simple cases however, we can fully determine the pro-étale fundamental group, as the following examples show.
	\end{remark}
	
	\begin{example}\cite[Example 3.24]{lara}\label{example projective line over any field}
		Consider a curve \(X\) obtained by gluing the \(k\)-rational points 0 and \(\infty\) of the projective line \(\IP^1_k\) together. (This is similar as in Example \ref{pefg of nodal curve}, except that we allow \(k\) to be any field.) For any étale field extension \(k'/k\), we get a geometric cover of \(X\) by gluing copies of \(\IP^1_{k'}\), either a finite or an infinite number, by using Theorem \ref{geometric covers of seminormal schemes}. To show that, up to isomorphism, we can get all the connected geometric covers of \(X\) this way, we have to show we cannot glue some \(\IP^1_{k_1}\) to some \(\IP^1_{k_2}\) if \(k_1\ncong k_2\). Indeed, there were such a geometric cover, it would correspond to a diagram
		\[\begin{tikzcd}
			X' \arrow[d] & Z' \arrow[d] \arrow[r] \arrow[l] & Y' \arrow[d]\\
			\IP^1_k & \spec k\bigsqcup \spec k \arrow[r] \arrow[l] & \spec k,
		\end{tikzcd}\]
		where \(Z'\) contains \(\spec k_1\) and \(\spec k_2\) as clopen subschemes, which are mapped to the same point of \(Y'\). But the right square of this diagram must be a fibre square, which shows this can only happen if \(k_1\cong k_2\). So we have found all connected geometric covers of \(X\). Let \(x\) be a geometric point of \(X\). For the pro-étale fundamental group, note that it is generated by \(\IZ\) and \(\Gal(\overline{k}/k)\), where \(\IZ\) acts by translation, and \(\Gal(\overline{k}/k)\) by the usual Galois action on the irreducible components. Indeed, we are only gluing points of \(\IP^1_k\), and \(\efg(\IP^1_k,x)\cong \Gal(\overline{k}/k)\). Moreover, it is readily seen that these two actions commute with each other, so that \(\pefg(X,x)\cong \IZ\times \Gal(\overline{k}/k)\), with the product topology. 
	\end{example}
	
	So far, we only considered curves whose normalization map induced isomorphisms on residue fields. Let us see what can happen if this is not the case.
	
	\begin{example}
		We want to consider a scheme \(X\) which looks like the complex projective line, except that the residue field at \(\infty\) is \(\IR\) instead of \(\IC\). Formally, we can construct this as \(\IP^1_{\IC} \coprod_{\spec \IC} \spec \IR\), where \(\spec \IC\into \IP^1_{\IC}\) is the inclusion of \(\infty\). Using Theorem \ref{geometric covers of seminormal schemes}, we can see that, up to isomorphism, \(X\) has only two connected geometric covers: the identity, and a scheme \(Y\) gotten by gluing two copies of \(\IP^1_{\IC}\) together along their respective points at infinity. This corresponds to the diagram
		\[\begin{tikzcd}
			\IP^1_{\IC} \bigsqcup \IP^1_{\IC} \arrow[d] & \spec \IC \bigsqcup \spec \IC \arrow[d] \arrow[r] \arrow[l] & \spec \IC \arrow[d]\\
			\IP^1_{\IC} & \spec \IC \arrow[r] \arrow[l] & \spec \IR.
		\end{tikzcd}\]
		Indeed, since \(\IP^1_{\IC}\) has no nontrivial geometric covers, we see that any such diagram where the upper right scheme is not connected will give rise the a geometric cover that is not connected. Since \(Y\to X\) is a cover of degree 2, we see that for any geometric point \(x\) of \(X\), we have \(\pefg(X,x)\cong \IZ/2\IZ\), with the discrete topology.
	\end{example}
	
	\begin{example}
		Similarly as in the example above, consider a scheme \(X\) which looks like the complex projective line, but which has residue field \(\IR\) at two points: 0 and \(\infty\). 
		While it is possible to determine all the connected geometric covers, let us just mention that \(X\) has a universal geometric cover \(Y\). It is obtained by taking countably many copies of \(\IP^1_{\IC}\), and gluing them together by repeatedly identifying two points lying over \(0\in X\), and two points over \(\infty\in X\) (this is in contrast to Example \ref{example projective line over any field}, where we identified points lying over \(0\in \IP^1_k\) with points over \(\infty\in \IP^1_k\)).
		So the pro-étale fundamental group of \(X\) is discrete, and isomorphic to the automorphism group of \(Y\) over \(X\). This automorphism group is generated by translations, and the Galois action of \(\IC\) over \(\IR\). Since this Galois action essentially reverses the order of the copies of \(\IP^1_{\IC}\),  we see that \(\pefg(X,x)\cong \IZ \rtimes \IZ/2\IZ\) for any geometric point \(x\) of \(X\), where the action of \(\IZ/2\IZ\) is the unique non-trivial action.
	\end{example}
	
	\begin{remark}
		By \cite[Tags 0C1S and 0C39]{stacks}, an integral scheme is (geometrically) unibranch at a point if and only if the number of (geometric) branches at that point is 1. So the example above gives a scheme which is unibranch, but whose étale and pro-étale fundamental group are not isomorphic. This shows that the geometrically unibranch assumption from Proposition \ref{pefg of normal schemes} cannot be weakened to only unibranch.
	\end{remark}

	\section{Pro-étale torsors}\label{section torsors}
	
	Although the pro-étale fundamental group of a scheme \(X\) classifies the geometric covers of \(X\), we already had to determine the whole category of geometric covers of \(X\) to compute its pro-étale fundamental group. However, the pro-étale fundamental group can also be used to classify more general torsors, as we will show. Throughout this section, \(X\) again denotes a locally topologically noetherian connected scheme, and \(x\) a geometric basepoint of \(X\).
	
	\begin{notation}
		For any topological space \(T\), denote by \(\Ff_T\) the presheaf on \(X_\pet\) given by \(U\mapsto \cont(U,T)\). This is a sheaf by \cite[Lemma 4.2.12]{proetale}, and if \(T\) is a topological group or monoid, this gives \(\Ff_G\) the structure of a pro-étale sheaf of groups or monoids. 
		For a topological group \(G\), we denote by \(B\Ff_G(X_\pet)\) the groupoid of (right) \(\Ff_G\)-torsors on \(X_\pet\), with equivariant morphisms.
	\end{notation}
	
	To construct such torsors later in this section, we will need to consider the limit \(\varprojlim_U G/U\) of topological spaces, where \(U\) ranges over the set of open subgroups of a Noohi group \(G\). As these form a basis of open neighbourhoods of \(1\in G\), there is a natural injective map \[G\to \varprojlim_U G/U,\] but it is not surjective in general\footnote{This failure of surjectivity and the counterexample were pointed out to me by Marcin Lara.}: let \(S\) be a discrete set, and \(G=\Aut(S)\) with the compact-open topology. Then \(G\) is a Noohi group by \cite[Example 7.1.2]{proetale}, and a basis of open neighbourhoods of \(1\in \Aut(S)\) is given by the pointwise stabilizers \(U_F\) of finite subsets \(F\subseteq S\). For each such \(F\), there is a natural injection \(G/U_F\to \map(F,S)\), with image the injective maps \(F\to S\). Passing to the limit, we get a continuous injection \[\varprojlim_F G/U_F \to \varprojlim_F \map(F,S) = \map(S,S),\] and its image are again exactly the injective maps. In particular, if \(S\) is infinite, the map \(G=\Aut(S)\to \varprojlim_F G/U_F\) is not surjective.
	
	What does hold, is that there is a natural homeomorphism \(\varprojlim_U G/U \cong \End(F_G)\), with \(F_G\colon G\set\to \Set\) the forgetful functor, and where \(\End(F_G)\) is topologized using the compact-open topology, similarly as in Definition \ref{definition pefg}. This was shown in \cite[Proposition 4.1.1]{lepage}, and the isomorphism is given by sending \((g_U)_U\in \varprojlim_U G/U\) to the endomorphism mapping \(s\in S\in G\set\) to \(g_{G_s}\cdot s\), where \(G_s\subseteq G\) is the stabilizer of \(s\) in \(S\). Moreover, the natural map \(G\to \varprojlim_U G/U\) can be identified with the inclusion \(\Aut(F_G)\to \End(F_G)\), so that the image of this map are exactly the invertible elements of the monoid \(\End(F_G)\).
	
	\begin{lemma}\label{no partial inverses}
		Elements of \(\End(F_G)\) with a left or right inverse are already invertible.
	\end{lemma}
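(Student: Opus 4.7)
The plan is to reduce the lemma to the following general observation: every $\phi \in \End(F_G)$ acts injectively on each $G$-set. Once this is established, a one-sided inverse of $\phi$ will force $\phi_S$ to also be surjective on every $S$, hence bijective; then $\phi$ will be a natural automorphism of $F_G$, and will therefore lie in $\Aut(F_G) = G$ by the Noohi hypothesis, so will in particular be invertible.

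To prove the key injectivity claim, I will fix $\phi \in \End(F_G)$ and a $G$-set $S$, and apply naturality of $\phi$ in two steps. First, naturality with respect to the two projections $S \times S \to S$ forces $\phi_{S \times S}$ to act componentwise, i.e.\ $\phi_{S \times S}(s_1, s_2) = (\phi_S(s_1), \phi_S(s_2))$. Second, the diagonal $\Delta_S \subseteq S \times S$ is $G$-stable because $G$ acts diagonally, so its complement $(S \times S) \setminus \Delta_S$ is a sub-$G$-set; naturality of $\phi$ with respect to its inclusion then forces $\phi_{S \times S}$ to preserve this complement. Combining the two observations, $s_1 \ne s_2$ forces $\phi_S(s_1) \ne \phi_S(s_2)$, giving injectivity of $\phi_S$.

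With injectivity in hand, the conclusion is immediate. In the right-inverse case $\phi \psi = \mathrm{id}$, each $\phi_S$ is surjective from the relation and injective by the key claim, hence bijective; so $\phi \in \Aut(F_G) = G$ is invertible. The left-inverse case $\psi \phi = \mathrm{id}$ is symmetric: applied to $\psi$ (which is surjective by the relation and injective by the key claim), the same argument gives $\psi \in G$, whence $\phi = \psi^{-1} \in G$.

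I do not anticipate any serious obstacle: the two naturality arguments are standard, and the only small point to verify is the $G$-stability of $(S \times S) \setminus \Delta_S$, which is immediate because $G$ acts diagonally and preserves the diagonal.
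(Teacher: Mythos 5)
Your proof is correct, and it takes a genuinely different route from the paper's. The paper establishes the key injectivity claim topologically: it uses the identification $\End(F_G)\cong\varprojlim_U G/U$ to write $\phi$ as the limit of a net $(g_U)_U$ of genuine group elements, notes that the evaluation map $\End(F_G)\to\map(S,S)$ is continuous for the compact-open topology, and concludes that $\phi_S$, being a pointwise limit of bijections into a discrete set, is injective. Your argument is purely categorical and uses no topology at all: naturality against the two projections $S\times S\to S$ forces $\phi_{S\times S}$ to act coordinatewise, and naturality against the inclusion of the $G$-stable complement $(S\times S)\setminus\Delta_S$ (which is indeed a sub-$G$-set, since stabilizers $G_{s_1}\cap G_{s_2}$ remain open) forces $\phi_{S\times S}$ to preserve it, giving injectivity of $\phi_S$. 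This is cleaner and more self-contained, and is essentially the standard ``endomorphisms of a fibre functor are monos'' trick from Tannakian-style arguments. One minor remark: you invoke $\Aut(F_G)=G$ (the Noohi hypothesis) at the end, but the lemma only asserts invertibility in $\End(F_G)$, for which bijectivity of every $\phi_S$ already suffices; the Noohi identification is not needed for this lemma, though it is of course used immediately afterwards in the paper.
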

	\begin{proof}
		Let \(\phi,\psi\in \End(F_G)\) be such that \(\phi\circ \psi = \operatorname{Id}_{F_G}\), and consider any \(S\in G\set\).
		Since the map \(\phi_S\colon S\to S\) is surjective, it will be enough to show \(\phi_S\) is injective. Indeed, this will imply each \(\phi_S\), and hence each \(\psi_S = \phi_S^{-1}\), is bijective.
		
		Let \((\alpha_U)_U\in \varprojlim_U G/U\) be the element corresponding to \(\phi\). Choosing for each open subgroup \(U\subseteq G\) a representative \(g_U\in \alpha_U\subseteq G\) gives a net in \(\End(F_G)\), which converges to \(\phi\). Now, there is a natural map \[\Phi\colon \End(F_G)\to \map(S,S).\] If we equip the latter with the compact-open topology, this map is continuous by definition of the topology on \(\End(F_G)\). In particular, we have a net \((\Phi(g_U))_U\) in \(\map(S,S)\), which converges to \(\phi_S\). As a limit of injective (even bijective) maps into a discrete set, we conclude that \(\phi_S\) is itself injective.
	\end{proof}

	The following theorem classifies pro-étale \(\Ff_G\)-torsors, for Noohi groups \(G\). Note that the case of profinite groups already appeared in \cite[Lemma 7.4.3]{proetale}, and one can use an argument similar to the proof of \cite[Lemma 7.4.7]{proetale} to show the claim for locally profinite groups. Our method, on the other hand, works for all Noohi groups, and does not need to reduce to some simpler case. For two topological groups \(G\) and \(H\), let us denote by \(\underline{\Hom}_{\cont}(H,G)\) the groupoid of continuous homomorphisms \(H\to G\), where the morphisms between \(f_1,f_2\colon H\to G\) are given by elements \(g\in G\) conjugating \(f_1\) into \(f_2\).
	
	\begin{theorem}\label{pefg classifies torsors}
		If \(G\) is a Noohi group, there is an equivalence \[B\Ff_G(X_\pet)\cong \underline{\Hom}_{\cont}(\pefg(X,x),G).\]
	\end{theorem}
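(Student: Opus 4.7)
The plan is to build quasi-inverse functors \(\Phi\colon B\Ff_G(X_\pet)\to \underline{\Hom}_{\cont}(\pefg(X,x),G)\) and \(\Psi\) in the other direction, following the standard torsor/representation dictionary but carefully handling the discrepancy between \(G\) and \(\End(F_G)=\varprojlim_U G/U\) that appears once \(G\) is only Noohi rather than profinite.

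For \(\Phi\), given a right \(\Ff_G\)-torsor \(P\), I associate to each open subgroup \(U\leq G\) the contracted product sheaf \(P/U := P\times^{\Ff_G}\Ff_{G/U}\) on \(X_\pet\). As \(G/U\) is discrete, this sheaf is locally constant and hence represented by a geometric cover of \(X\), so via \(\Cov_X\cong \pefg(X,x)\set\) it becomes a continuous \(\pefg(X,x)\)-set whose underlying set is (non-canonically) \(G/U\). Choosing a point in the stalk \(P_x\) trivializes these sets \((P/U)_x\cong G/U\) compatibly, and the \(\pefg(X,x)\)-action is then encoded by continuous maps \(\rho_U\colon \pefg(X,x)\to G/U\) acting by left multiplication. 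Assembling yields a continuous monoid homomorphism \(\rho_P\colon \pefg(X,x)\to \varprojlim_U G/U=\End(F_G)\); since \(\pefg(X,x)\) is a group, Lemma \ref{no partial inverses} forces its image to lie in \(\Aut(F_G)=G\), producing an object \(\rho_P\in \underline{\Hom}_{\cont}(\pefg(X,x),G)\). A different choice of basepoint in \(P_x\) changes \(\rho_P\) by inner conjugation in \(G\), giving an isomorphism in the target groupoid.

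For \(\Psi\), given a continuous \(\rho\colon \pefg(X,x)\to G\) and an open subgroup \(U\leq G\), restriction along \(\rho\) turns \(G/U\) into a continuous \(\pefg(X,x)\)-set compatible with its ambient \(G\)-structure; under \(\pefg(X,x)\set\cong \Cov_X\) this yields a geometric cover \(Y_U\to X\) carrying a natural \(G/U\)-action from right multiplication. The system \(\{Y_U\}_U\) is cofiltered, and I would define \(P_\rho\) as the limit of the \(Y_U\) as a pro-étale sheaf, equipped with the right \(G\)-action assembled from the compatible \(G/U\)-actions. On objects the two constructions are visibly mutually quasi-inverse, since the quotients of \(P_\rho\) recover the \(Y_U\) and the homomorphism extracted from \(P_\rho\) agrees with \(\rho\) under the chosen trivialization; on morphisms, a \(G\)-equivariant isomorphism between torsors is determined at a basepoint by right multiplication by a unique element of \(G\), matching conjugation in the target. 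The hard step is verifying that \(P_\rho\) is genuinely an \(\Ff_G\)-torsor rather than merely an \(\Ff_{\End(F_G)}\)-torsor: on the tautological trivializing pro-étale cover \(\varprojlim_U Y_U\to X\) the sections of \(P_\rho\) a priori form a \(\varprojlim_U G/U=\End(F_G)\)-set, and the Noohi hypothesis via Lemma \ref{no partial inverses} is precisely what cuts this down to a genuine \(G\)-torsor structure and makes the classification work uniformly for all Noohi groups, including non-profinite ones.
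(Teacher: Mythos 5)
Your overall architecture matches the paper's: both directions go through contracted products by \(G/U\) and the limit \(\varprojlim_U G/U \cong \End(F_G)\), and you correctly flag that the Noohi hypothesis and Lemma \ref{no partial inverses} are what make the argument work beyond the profinite case. The forward direction is essentially fine (though the appeal to Lemma \ref{no partial inverses} there is superfluous: the image of a group under a monoid homomorphism automatically lands in the units, which are \(\Aut(F_G)=G\)). The paper routes this direction more structurally, through \cite[Theorem 7.2.5(2)]{proetale}, identifying \(\underline{\Hom}_{\cont}(\pefg(X,x),G)\) with fibre-functor-compatible functors \(G\set\to \Loc_X\); you build \(\rho\) by hand, which is equivalent.

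The gap is in \(\Psi\). You write that you ``would define \(P_\rho\) as the limit of the \(Y_U\) as a pro-étale sheaf, equipped with the right \(G\)-action assembled from the compatible \(G/U\)-actions,'' and then that Lemma \ref{no partial inverses} ``cuts this down'' to a \(G\)-torsor. But \(\varprojlim_U Y_U\) is not an \(\Ff_G\)-torsor and cannot be made one by a verification: over a connected pro-étale \(T\) trivializing it, its sections are \(\varprojlim_U G/U = \End(F_G)\), which strictly contains \(G\) when \(G\) is not profinite (this is exactly the phenomenon of the \(\Aut(S)\) counterexample in the paper, just before Lemma \ref{no partial inverses}). So the limit is a \(\Ff_{\End(F_G)}\)-pseudo-torsor, not a \(\Ff_G\)-torsor, and one must pass to a proper subsheaf. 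The paper does this by taking an affine w-contractible cover \(W\to X\), computing \(\Tilde{\Pp}(T)\cong \cont(T,\varprojlim_U G/U)\) for \(T\in W_\pet\), and then carving out \(\Pp\subset\Tilde{\Pp}\) as the sections whose restriction to every connected \(T\in W_\pet\) lies in \(G\). The nontrivial content of Lemma \ref{no partial inverses} is that it gives a purely monoid-theoretic characterization of \(\Aut(F_G)\subseteq \End(F_G)\) (as the \(\phi\) with \(\End(F_G)\cdot\phi=\End(F_G)\)), which is what makes this subsheaf condition stable under restriction along connected \(T'\to T\) and hence well-defined, and which lets one check \(\Pp\) is indeed an \(\Ff_G\)-torsor. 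Without spelling out the subsheaf construction and its compatibility with restriction maps, the proposal does not actually produce the torsor; the lemma does not by itself shrink the limit.
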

	\begin{proof}
		By \cite[Theorem 7.2.5 (2)]{proetale}, \(\underline{\Hom}_{\cont}(\pefg(X,x),G)\) is equivalent to the groupoid of functors \(G\set\to \Loc_X\) compatible with the fibre functors \(F_G\) and \(\ev_x\). So it is enough to show that \(B\Ff_G(X_\pet)\) is equivalent with this groupoid of functors.
		
		For a pro-étale \(\Ff_G\)-torsor \(\Pp\) on \(X\), we get a functor \(G\set\to \Loc_X\) by sending \(S\in G\set\) to the contracted product \(\Pp\times^{\Ff_G} \Ff_S\), where the action of \(\Ff_G\) on \(\Ff_S\) is the one induced by the action of \(G\) on \(S\). To show compatibility with the fibre functors, let us fix \(q\in \Pp_x\), which can be identified with the underlying set of \(G\). Then there is a natural isomorphism \(S\xrightarrow{\cong} \ev_x(\Pp\times^{\Ff_G} \Ff_S) \colon s\mapsto (q,s)\). Note that although we had to choose some \(q\in \Pp_x\) to show this compatibility, the functor \(G\set\to \Loc_X\) itself is independent of this choice.
		
		On the other hand, suppose we have a functor \(\Phi\colon G\set\to \Loc_X\), compatible with the fibre functors. For each \(S\in G\set\), the action of \(\End(F_G)\) on \(\ev_x(\Phi(S))=S\) extends uniquely to an action of \(\Ff_{\End(F_G)}\) on \(\Phi(S)\). Define a sheaf \[\Tilde{\Pp}\in \Shv(X_\pet) \quad \text{as } \varprojlim_U \Phi(G/U),\] where \(U\) ranges over the open subgroups of \(G\). Then the \(\Ff_{\End(F_G)}\)-action on each \(\Phi(G/U)\) lifts to a unique \(\Ff_{\End(F_G)}\)-action on \(\Tilde{\Pp}\). To construct a torsor, we may assume \(X\) is affine, as the general case will follow by gluing. In that case, \(X\) admits an affine w-contractible pro-étale cover \(W\) by \cite[Theorem 1.5]{proetale}. Such w-contractible schemes are defined by the property that any pro-étale cover over them splits. In particular, any locally constant pro-étale sheaf over \(W\) is already constant, so that for any \(T\in W_\pet\), we have an \(\End(F_G)\)-equivariant isomorphism
		\begin{equation}\label{sections of limit}
			\Tilde{\Pp}(T)=\varprojlim_U \Phi(G/U)(T)\cong\varprojlim_U \cont(T,G/U) = \cont(T,\varprojlim_U G/U).
		\end{equation}
		In particular, \(\Tilde{\Pp}(T)\cong \varprojlim_U G/U\) if \(T\) is connected. Now observe that by Lemma \ref{no partial inverses}, the subset \(\Aut(F_G)\subseteq \End(F_G)\) can be characterized as those elements \(\phi\in \End(F_G)\) for which \(\End(F_G)\cdot \phi=\End(F_G)\). In particular, for connected \(T\in W_\pet\), the \(\End(F_G)\)-action allows us to recover which elements of \(\Tilde{\Pp}(T)\) must correspond to elements of \(G\), and for connected \(T'\to T\in W_\pet\), the restriction \(\Tilde{\Pp}(T)\to \Tilde{\Pp}(T')\) preserves these elements. This allows us to define the subpresheaf \(\Pp\) of \(\Tilde{\Pp}\) whose sections over some \(Y\in X_\pet\) consist of those sections whose restriction to \(T\) lies in \(G\subseteq \Tilde{\Pp}(T)\), for any connected \(T\to Y\) with \(T\in W_\pet\). Then \(\Pp\) is a sheaf, the \(\Ff_{\End(F_G)}\)-action on \(\Tilde{\Pp}\) restricts to an \(\Ff_G\)-action on \(\Pp\), and for any \(T\in W_\pet\), we have an equivariant isomorphism \(\Pp(T)\cong \cont(T,G)\). In particular, \(\Pp\) becomes isomorphic to \(\Ff_G\) over \(W\), which shows that \(\Pp\) is an \(\Ff_G\)-torsor.
		
		These mappings can be upgraded to functors, which are readily seen to be mutual quasi-inverses.
	\end{proof}
	
	In particular, taking isomorphism classes of these groupoids gives Theorem \ref{theorem 3}.

	\section{The Siegel threefold}\label{section siegel}
	
	In this final section, we use our previous results to deduce the main theorem. Let us fix a prime \(p\), some integer \(N\geq 3\) prime to \(p\), and consider the moduli space \(\Aa_{2,1,N}\) of \emph{principally polarized abelian surfaces with level-\(N\) structure} over \(\overline{\IF_p}\). As it is three-dimensional and corresponds to the Siegel Shimura datum, it is also known as the \emph{Siegel threefold}. The associated basic Newton stratum \(V_N\) is exactly the locus of supersingular abelian surfaces, which is one-dimensional, cf.~\cite[p.217]{rapoportnewton}. To study this supersingular locus, we will construct families of principally polarized supersingular abelian surfaces over \(\IP^1_{\overline{\IF_p}}\). These families will then give rise to maps \(\IP^1_{\overline{\IF_p}}\to V_N\), which will allow us to get a grasp on the geometry of \(V_N\). This construction is originally due to Moret-Bailly, cf.~\cite{mbp}, and works for all \(p\geq 3\). So we assume that \(p\geq 3\), and refer to \cite{mb2} for a similar construction when \(p=2\). For the rest of this section, we let \(S=\IP^1_{\overline{\IF_p}}\), and any fibre product without subscript will be over \(\spec \overline{\IF_p}\).
	
	\begin{construction}\label{construction of families of supersingular AVs}
		Choose two supersingular elliptic curves \(E_1\) and \(E_2\) over \(\overline{\IF_p}\). They both admit a natural subgroup isomorphic to \(\alpha_p\), given by the kernel of their respective Frobenius maps; let us fix such inclusions \(\alpha_p=\spec \overline{\IF_p}[\beta_i]/(\beta_i^p)\into E_i\). Consider the subgroup scheme \(H = V(Y\beta_1-X\beta_2)\subset \alpha_p \times \alpha_p \times S\), where \((X,Y)\) is a homogeneous coordinate of \(S=\IP^1_{\overline{\IF_p}}\). Now we consider the quotient \(\XX:= (E_1\times E_2 \times S)/H\), giving a diagram
		\[\begin{tikzcd}
			1 \arrow[r] & H \arrow[r,"\Delta"] & E_1 \times E_2 \times S \arrow[d, "\pr_1"] \arrow[rd, "\pr_2"] \arrow[r, "\pi"] & \XX \arrow[r] \arrow[d,"q"]&1\\
			&&E_1 \times E_2 &S&
		\end{tikzcd}\] where the top row is exact. Clearly, \(\XX\) is a family of supersingular abelian surfaces over \(S\), and we want to endow it with a principal polarization. For this, consider any ample line bundle \(\Ll\) on \(E_1\times E_2\) which is symmetric (i.e., \(i^*\Ll\cong \Ll\), with \(i\colon E_1\times E_2\to E_1\times E_2\) the inversion map) and satisfies \(K_{\Ll}\cong \alpha_p\times \alpha_p\), where \(K_\Ll:= \ker(A\to A^\vee\colon  x\mapsto t_x^*\Ll)\) and \(t_x\) is the translation by an element \(x\) of \(A\). Then, as \(H\subseteq K_{\pr_1^*(\Ll)}\) and \(H\) is totally isotropic for the commutator pairing \(e^{\pr_1^*(\Ll)}\colon K_{\pr_1^*(\Ll)} \times K_{\pr_1^*(\Ll)} \to \IG_{m,S}\) associated to \(\pr_1^*(\Ll)\) (since this pairing is alternating, the fibres of \(H\) are isomorphic to \(\alpha_p\), and there are no non-zero homomorphisms \(\alpha_p\to \IG_m\)), we see that \(\pr_1^*(\Ll)\) descends to a unique line bundle \(\Mm\) on \(\XX\), i.e., \(\pi^*(\Mm)\cong \pr_1^*(\Ll)\). Moreover, as \(E_1\times E_2\times S\to \XX\) is an isogeny and \(\Mm\) pulls back to an \(S\)-relatively ample line bundle, \(\Mm\) is \(S\)-relatively ample itself. And since both the polarization induced by \(\pr_1^*(\Ll)\), and the isogeny \(E_1\times E_2 \times S\to \XX\), have degree \(p\), it follows that \(\Mm\) induces a principal polarization on \(\XX\).
	\end{construction}
	
	\begin{remark}\label{remark products of ss elliptic curves are independent}
		In fact, it does not matter which supersingular elliptic curves are chosen. Indeed, due to a theorem of Deligne, a proof of which can be found in \cite[Theorem 3.5]{shioda}, all products of \(g\geq 2\) supersingular elliptic curves are isomorphic. Moreover, since the inclusion \(\alpha_p \times \alpha_p\into E_1\times E_2\) is given by the kernel of the Frobenius map, this is also independent of the chosen elliptic curves.
	\end{remark}
	
	Now, consider a principally polarized family \(\XX\to S\) of supersingular abelian surfaces as constructed above. This is an abelian scheme, and since \(N\) is prime to \(p\), the \(N\)-torsion subgroup \(\XX[N]\subset \XX\) is finite étale over \(S\). But since \(S\cong \IP^1_{\overline{\IF_p}}\) is simply connected, any finite étale cover is trivial, so that \(\XX[N]\) is a disjoint union of copies of \(S\). Using this decomposition, we can easily put a level-\(N\) structure on the family \(\XX\to S\). By the moduli interpretation of \(V_N\), this then corresponds to a map \(S\to V_N\). Moreover, the morphism \(S\to V_N\) is not constant, as \(\XX\to S\) is not a constant family. Indeed, by Remark \ref{remark products of ss elliptic curves are independent}, it is enough to show that if \(E/\overline{\IF_p}\) is a supersingular elliptic curve, and \(H\subseteq E\times E\times S\) the subgroup from Construction \ref{construction of families of supersingular AVs}, then \((E\times E\times S)/H\) has fibres that are isomorphic to the product of two supersingular elliptic curves, and fibres that are not. For the former, one can just take \(s=0\in S\), and the latter is shown in \cite[Introduction]{oortabelian}. Finally, since both \(S\) and \(V_N\) are one-dimensional, we see that the image of this map is an irreducible component of \(V_N\).
	
	\begin{remark}\label{pefg of ss locus}
		It was proven in \cite[Theorems 2.1 and 2.3]{katsuraoort} that not only is every irreducible component of \(V_N\) the image of a map \(S\to V_N\) as constructed above, but also that these maps realize \(S=\IP^1_{\overline{\IF_p}}\) as the normalization of such an irreducible component. As \(V_N\) is moreover connected, cf.~\cite[Theorem 7.3 and Corollary 8.4]{oortstratification}, we can use Theorem \ref{pefg of curves over separably closed field} to conclude that the pro-étale fundamental group of \(V_N\) is a discrete free group on a finite number of generators. In fact, one can determine the exact number of generators using \cite[Theorems 2.4, 5.1, and 5.3]{katsuraoort}, but we will not need this. While we will also not use it, it might be interesting to note that the non-normal points of \(V_N\) are exactly those whose corresponding abelian surface is isomorphic to the product of two supersingular elliptic curves. Finally, what we will use later, is that all singularities of \(V_N\) are ordinary singularities. This is proven in \cite[p193]{koblitz}, and it implies that one can glue copies of \(S=\IP^1_{\overline{\IF_p}}\) along closed points to get \(V_N\) itself, not just its seminormalization.
	\end{remark}
	
	Now let us describe the pro-étale torsor we are interested in. While its precise definition is a bit involved, we will not need it in the following, so we content ourselves by giving the description below. Let us fix a geometric basepoint \[x\colon \spec \overline{\IF_p}\to V_N,\] with associated supersingular abelian surface \(A_x\), principal polarization \(\lambda_x\), and level structure \(\eta_x\). Recall that \(J_b(\IQ_p)\) is the group of self-quasi-isogenies of the \(p\)-divisible group \(A_x[p^\infty]\), respecting the induced polarization up to a scalar in \(\IQ_p^\times\). As the notation suggests, these are the \(\IQ_p\)-valued points of an algebraic group, and hence are naturally endowed with a locally profinite topology.
	
	\begin{definition}[{\cite[Proposition 4.3.13]{generic}}]
		There is a natural pro-étale \(J_b(\IQ_p)\)-torsor on \(V_N\), which, above any geometric point \(y\) of \(V_N\), parametrizes the quasi-isogenies between \(A_x[p^\infty]\) and \(A_y[p^\infty]\), respecting the polarizations up to a scalar in \(\IQ_p^\times\). We denote this torsor by \(\Pp_N\).
	\end{definition}
	
	Since \(J_b(\IQ_p)\) is locally profinite, we can use Theorem \ref{pefg classifies torsors} to get a map \[\pefg(V_N,x)\to J_b(\IQ_p),\] and it is the image of this map that we want to determine. Note that this map is only defined up to inner automorphism of \(J_b(\IQ_p)\), but such inner automorphisms only conjugate the image anyway. 
	
	To describe this map, note that it depends on a choice of element in \(\ev_x(\Pp_N)\). For this, we can simply consider the natural quasi-isogeny \[A_x[p^\infty]\to A_x[p^\infty]\] respecting the polarizations, given by the identity. Now, since all connected components of the normalization of \(V_N\) are simply connected, the proof of Theorem \ref{pefg of curves over separably closed field} allows us to describe an element of \(\pefg(V_N,x)\) by a finite sequence \((y_0,y_1,\ldots, y_{2m},y_{2m+1})\) of closed points of this normalization, such that \(y_0\) and \(y_{2m+1}\) map to \(x\) in \(V_N\), such that any two consecutive points of the form \((y_{2k},y_{2k+1})\) lie in the same connected component, and such that any two consecutive points of the form \((y_{2k-1},y_{2k})\) map to the same point of \(V_N\). Elements of \(\pefg(V_N,x)\) do not determine such sequences uniquely, but each such sequence determines a unique element of \(\pefg(V_N,x)\). 
	
	Let \(\phi\in \pefg(V_N,x)\), and choose such a sequence \((y_0,\ldots,y_{2m+1})\) associated to \(\phi\). For each pair \((y_{2k-1},y_{2k})\), let \(A_k\) be the supersingular abelian surface corresponding to their image in \(V_N\). On the other hand, choose for each pair \((y_{2k},y_{2k+1})\) a product of supersingular elliptic curves \(E_k,E'_k\) that, along with some line bundle \(\Ll\) as in Construction \ref{construction of families of supersingular AVs} and the right level structure, gives a family of principally polarized abelian surfaces over \(S\), corresponding to the connected component of \(y_{2k}\) and \(y_{2k+1}\). Then each \(A_k\) is in an obvious way a quotient of \(E_{k-1}\times E'_{k-1}\) and \(E_{k}\times E'_{k}\), both by a subgroup isomorphic to \(\alpha_p\). We get the following sequence of maps:
	\begin{equation}\label{quasi-isogeny obtained from path}
		\begin{tikzcd}[row sep=tiny]
			& E_0\times E_0'\arrow[dr] \arrow[dl] && \ldots \arrow[dr] \arrow[dl] && E_m\times E_m' \arrow[dr] \arrow[dl]&\\
			A_x && A_1 && A_{m} && A_x.
		\end{tikzcd}
	\end{equation}
	Their composition in the category of abelian varieties up to isogeny gives a quasi-isogeny \(A_x\to A_x\), which only depends on \(\phi\), not on the chosen sequence, and is also independent of the choices of supersingular elliptic curves \(E_i\) and \(E'_i\). Moreover, by the restrictions on the \(y_i\), this quasi-isogeny is compatible with both the polarization and the level structure. Finally, it induces a quasi-isogeny \(A_x[p^\infty]\to A_x[p^\infty]\) of p-divisible groups, and hence an element of \(J_b(\IQ_p)\), which is exactly the image of \(\phi\).
	
	We can say a few things about the quasi-isogeny \(A_x\to A_x\) above:
	\begin{itemize}
		\item It is of degree 1,
		\item It is a \(p\)-power quasi-isogeny, and
		\item It is compatible with both the polarization \(\lambda_x\) and the level structure \(\eta_x\).
	\end{itemize}
	
	Our goal will be to show that any quasi-isogeny satisfying these conditions will arise by a sequence as in \eqref{quasi-isogeny obtained from path}, as this gives us a description of the image of \(\pefg(V_N,x)\to J_b(\IQ_p)\). In fact, only the last condition is really necessary: any quasi-isogeny preserving the principal polarization must have degree 1, and every prime-to-\(p\) quasi-isogeny that preserves the polarization and level structure is already trivial.
	
	\begin{proposition}\label{image of the map}
		The image of the map \(\pefg(V_N,x)\to J_b(\IQ_p)\) consists exactly of those elements of \(J_b(\IQ_p)\) that are induced by self-quasi-isogenies of \(A_x\), preserving the polarization \(\lambda_x\) (not just up to a scalar) and level structure \(\eta_x\).
	\end{proposition}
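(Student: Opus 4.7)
The forward containment is immediate from the construction preceding the proposition: given $\phi \in \pefg(V_N,x)$ represented by a sequence $(y_0,\ldots,y_{2m+1})$, the diagram \eqref{quasi-isogeny obtained from path} produces a self-quasi-isogeny of $A_x$ which by construction respects both $\lambda_x$ and $\eta_x$ (the polarization descends at each Moret-Bailly step by Construction \ref{construction of families of supersingular AVs}, and the level-$N$ structures are forced to match at each gluing point $y_{2k-1}=y_{2k}$), and whose induced action on $A_x[p^\infty]$ is the image of $\phi$ in $J_b(\IQ_p)$.

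For the converse, let $f\colon A_x\to A_x$ be a self-quasi-isogeny preserving $\lambda_x$ and $\eta_x$; the plan is to produce a path $(y_0,\ldots,y_{2m+1})$ whose associated quasi-isogeny realizes $f$ via \eqref{quasi-isogeny obtained from path}. First I would reduce to the case that $f$ is a $p$-power quasi-isogeny: since every prime-to-$p$ quasi-isogeny preserving both structures is trivial (as noted in the text), the $p$-part of $f$ gives the same class in $J_b(\IQ_p)$ and still preserves $\lambda_x$ and $\eta_x$, so we may replace $f$ by it.

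The heart of the argument is then the following decomposition claim: every such $f$ can be written as a composition of \emph{elementary} quasi-isogenies of the form $B \leftarrow E_1\times E_2 \to B'$, where $E_1, E_2$ are supersingular elliptic curves, both arrows are $\alpha_p$-isogenies with kernels inside the Frobenius kernel $\alpha_p\times \alpha_p \subset E_1\times E_2$, and $B$, $B'$ are principally polarized supersingular abelian surfaces with compatible level-$N$ structures. To establish this, I would write $f$ as a zig-zag $A_x \to C_1 \leftarrow C_2 \to \cdots \to A_x$ of $\alpha_p$-isogenies between principally polarized supersingular abelian surfaces, alternating directions so that each intermediate surface remains principally polarized (the polarization on each $C_i$ being transported by an $\alpha_p$-quotient that is totally isotropic for the polarization on the previous step). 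For each consecutive pair I would exhibit a common $E_1\times E_2$ dominating both via $\alpha_p$-isogenies whose kernels lie in the Frobenius kernel; the existence of such a dominating product is guaranteed by the Katsura-Oort description recalled in Remark \ref{pefg of ss locus} (every principally polarized supersingular abelian surface is a fibre of some Moret-Bailly family), together with a Dieudonné-module analysis identifying both $\alpha_p$-kernels as lines inside a common $\alpha_p\times \alpha_p$.

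Each elementary factor then corresponds to a segment $(y_{2k},y_{2k+1})$ inside the normalization of an irreducible component of $V_N$, while transitions between consecutive factors correspond to gluing points $y_{2k-1}=y_{2k}$ of $V_N$ where two Moret-Bailly families meet; Remark \ref{pefg of ss locus} ensures that such transitions genuinely exist at the singular points of $V_N$ (ordinary singularities). Concatenating gives an element $\phi\in \pefg(V_N,x)$ whose associated quasi-isogeny agrees with $f$, so that their images in $J_b(\IQ_p)$ coincide. I expect the principal difficulty to be the zig-zag decomposition: one must simultaneously preserve principal polarizations on all intermediate surfaces, propagate the level-$N$ structure consistently through each step, and realize every $\alpha_p$-kernel inside the Frobenius kernel of some product of supersingular elliptic curves. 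The Dieudonné-theoretic input should make this possible, since the Dieudonné module of any supersingular abelian surface already has the structure required to match the Moret-Bailly picture built from $\alpha_p\times \alpha_p$.
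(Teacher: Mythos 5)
The overall skeleton is right and close to the paper's: reduce to a $p$-power quasi-isogeny, factor it into a zig-zag of $\alpha_p$-isogenies, arrange for the apexes of the zig-zag to be products of supersingular elliptic curves, and then invoke the Moret-Bailly construction to read off a loop in $V_N$. But the central step — getting the apexes to be products $E\times E'$ with the pulled-back polarization having kernel $\alpha_p\times\alpha_p$ — is where your plan breaks down. The ``dominating product for consecutive pairs'' idea does not give the shape you need: if $B\to B'$ is already a degree-$p$ isogeny and you want $E\times E'\to B$ and $E\times E'\to B'$ both of degree $p$ with the triangle commuting, this forces $E\times E'\cong B$; so ``dominating'' a consecutive pair cannot be the mechanism, and Katsura--Oort (existence of a Moret-Bailly family through each fibre) does not supply what is required here. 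You also impose that \emph{all} intermediate surfaces stay principally polarized, which is unnecessary and not what the Moret-Bailly picture gives you: in the needed zig-zag only the ``bottom'' vertices are principally polarized, while the apexes carry a polarization with kernel $\alpha_p\times\alpha_p$.

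What the paper actually does, and what your proposal is missing, is a concrete rearrangement argument. Starting from the wedge $A_x\xleftarrow{[p^n]}A_x\xrightarrow{p^n\psi}A_x$ factored into $\alpha_p$-isogenies, one repeatedly replaces an ``up-down'' wedge $A_i\leftarrow A'\rightarrow A_j$ by a ``down-up'' wedge $A_i\rightarrow A''\leftarrow A_j$ (quotienting each of $A_i,A_j$ by the image of the other map's kernel), collapsing whenever the two kernels agree. After this, every surviving apex $A'$ has two \emph{distinct} $\alpha_p$-kernels. That is the crucial input to Oort's theorem \cite[Theorem~2]{oortabelian}: an abelian surface containing $\alpha_p\times\alpha_p$ is a product of supersingular elliptic curves. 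One then still has to check that the pulled-back polarization on the apex has kernel $\alpha_p\times\alpha_p$ (rather than the non-split self-extension of $\alpha_p$), which again follows from the presence of two distinct $\alpha_p$-subgroups. Your appeal to an unspecified ``Dieudonné-theoretic input'' is not a substitute for this chain of reasoning, and without the rearrangement step there is no reason the apexes of an arbitrary zig-zag are products at all. In short: the framing is right, but the key lemma (distinct kernels at each apex, hence a product by Oort, hence a Moret-Bailly family) is absent and the dominating-pair idea proposed in its place would not work.
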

	\begin{proof}
		We showed above that any element of \(\pefg(V_N,x)\) maps to such a quasi-isogeny. Now, fix a quasi-isogeny \(\psi\colon A_x\to A_x\) respecting the polarization and level structure. We want to factor \(\psi\) into a sequence of isogenies similar to \eqref{quasi-isogeny obtained from path}. Since it is a \(p\)-power quasi-isogeny, there is some \(n\geq 0\) such that \(p^n\psi\) is an isogeny \(A_x\to A_x\). So we can write \(\psi\) as the composition 
		\[A_x \xleftarrow{[p^n]} A_x \xrightarrow{p^n\psi} A_x,\]
		where the degrees of \([p^n]\) and \(p^n\psi\) agree. Moreover, since these degrees are powers of \(p\), we can inductively quotient out subgroups of their kernel of order \(p\), to factor \(\psi\) into a sequence of isogenies as follows, all of which have degree \(p\):
		\[\begin{tikzcd}[row sep=tiny]
			&&&A_x \arrow[rd] \arrow[ld]&&&\\
			&&A_k \arrow[ld, dotted, no head]&&A'_k \arrow[rd, dotted, no head]&&\\
			&A_1 \arrow[ld]&&&&A'_1 \arrow[rd]&\\
			A_x&&&&&&A_x.
		\end{tikzcd}\]
		Note that since all abelian surfaces are supersingular and we are working over \(\overline{\IF_p}\), all the kernels of these isogenies are isomorphic to \(\alpha_p\). Now, we want to obtain a factorization of \(\psi\) into a sequence of the following kind, where all the arrows are isogenies of degree \(p\):
		\begin{equation}\label{sequence of isogenies of degree p}
			\begin{tikzcd}[row sep=tiny]
				&A'_1\arrow[rd] \arrow[ld]&&\ldots \arrow[ld, dotted] \arrow[rd, dotted]&&A'_{l+1}\arrow[rd] \arrow[ld]& \\
				A_x&&A_1&&A_l&&A_x.
			\end{tikzcd}
		\end{equation}
		Again, we do this inductively, by rearranging subsequences of the form \(A_i \leftarrow A' \rightarrow A_j\). If the kernels of the two maps agree, then the composition \(A_i\leftarrow A'\rightarrow A_j\) is an isomorphism, and we can replace it by a single abelian surface. If the two kernels do not agree, they are still both isomorphic to \(\alpha_p\). So we can quotient out \(A_i\) by the image of the kernel of \(A'\to A_j\), and similarly take a quotient of \(A_j\). As these two quotients agree, we get a commutative diagram
		\[\begin{tikzcd}[row sep=tiny]
			&A'\arrow[rd] \arrow[ld]&\\
			A_i\arrow[rd]&&A_j \arrow[ld]\\
			&A'',&
		\end{tikzcd}\]
		and we can replace \(A_i\leftarrow A'\rightarrow A_j\) by \(A_i\to A''\leftarrow A_j\).
		
		Finally, we want to show that a sequence of isogenies as \eqref{sequence of isogenies of degree p} can arise from an element in \(\pefg(V_N,x)\). For this, equip each abelian surface \(A_i\) and \(A_i'\) with the polarization and level structure induced by the polarization and level structure of \(A_x\). Consider a subsequence of the form \[A_i \xleftarrow{f_i} A' \xrightarrow{f_j} A_j,\] and let \(\lambda_i\) and \(\lambda_j\) be the induced principal polarizations on \(A_i\) and \(A_j\) respectively. Recall that we may assume \(\ker(f_i)\neq \ker(f_j)\). Since both \(\ker(f_i)\) and \(\ker(f_j)\) are isomorphic to \(\alpha_p\), \cite[Theorem 2]{oortabelian} implies that \(A'\) is isomorphic to the product of two supersingular elliptic curves. Note also that \(f_i^*\lambda_i=f_j^*\lambda_j\). We claim that \(\ker(f_i^*\lambda_i)\) is isomorphic to \(\alpha_p\times \alpha_p\). Indeed, this is a finite subgroup scheme of a supersingular abelian surface of degree \(p^2\), so it must be isomorphic to either \(\alpha_p\times \alpha_p\) or a non-trivial extension of \(\alpha_p\) by itself. But such a non-trivial extension has only one subgroup isomorphic to \(\alpha_p\), while \(\ker(f_i^*\lambda_i)\) has at least two: \(\ker(f_i)\) and \(\ker(f_j)\).
		
		In particular, we can apply Construction \ref{construction of families of supersingular AVs} with the product of elliptic curves \(A'\), the polarization \(f_i^*\lambda_i\) and the right level structure, to get a family of abelian surfaces over \(S=\IP^1_{\overline{\IF_p}}\) that specializes to \(A_i\) and \(A_j\) at two points: these points are determined by the subgroups \(\ker(f_i)\) and \(\ker(f_j)\) of \(\ker(f_i^*\lambda_i)=\ker(f_j^*\lambda_j)\cong \alpha_p\times \alpha_p\). So each \(A_i\) with the induced polarization and level structure corresponds to a point of \(V_N\), the \(A'_j\)'s determine by the discussion above a path in \(V_N\) connecting all these points, and our assumption that \(\psi\) preserves \(\lambda_x\) and \(\eta_x\) assures us that this path is in fact a loop. This gives us an element of \(\pefg(V_N,x)\), which maps to \(\psi\in J_b(\IQ_p)\).
	\end{proof}
	
	\begin{remark}\label{our results agree with CS}
		We have shown that the image \(\Gamma\) of the map \(\pefg(V_N,x)\to J_b(\IQ_p)\) corresponding to \(\Pp_N\) consists of the self-quasi-isogenies of \(A_x[p^\infty]\) that are induced by self-quasi-isogenies of \(A_x\), preserving the polarization \(\lambda_x\) and the level structure \(\eta_x\). This agrees with a group defined in the proof of \cite[Theorem 6.23]{rapoportzink}, which is shown to be discrete. Using \cite[Theorem 6.30]{rapoportzink}, it follows that one can obtain the formal completion of \(\Aa_{2,1,N}\) along \(V_N\) as the quotient of some Rapoport-Zink space by \(\Gamma\). Note the slight inaccuracy in \cite[Remark 1.14]{generic} however, where it is stated that \(\Gamma\) is a discrete cocompact subgroup of \(J_b(\IQ_p)\). This is not correct: all quasi-isogenies in \(\Gamma\) have degree one, so the degree map \(J_b(\IQ_p)\to \IQ_{>0}\) factors through \(J_b(\IQ_p)/\Gamma\). But since the image of this degree map is not compact, \(J_b(\IQ_p)/\Gamma\) is not compact either, so that \(\Gamma \subset J_b(\IQ_p)\) is not cocompact. What does hold however, is that \(\Gamma \subset J_b(\IQ_p)\) is cocompact \emph{modulo the center} of \(J_b(\IQ_p)\), which is also the statement given in \cite[Theorem 6.30]{rapoportzink}.
	\end{remark}
	
	Next, we show that the map \(\pefg(V_N,x)\to J_b(\IQ_p)\) is injective. In the proof, the following observation will be used multiple times: let \(G\) be a finite subgroup scheme of a supersingular abelian variety over \(\overline{\IF_p}\), of degree \(p^n\) with \(n\geq 1\). If \(G\) does not contain a subgroup scheme isomorphic to \(\alpha_p\times \alpha_p\), then \(G\) has only one subgroup scheme of degree \(p\).
	
	\begin{proposition}\label{injectivity of the map}
		The map \(\pefg(V_N,x)\to J_b(\IQ_p)\) is injective.
	\end{proposition}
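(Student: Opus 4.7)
The plan is to lift an element $\phi \in \pefg(V_N,x)$ with trivial image in $J_b(\IQ_p)$ to a zig-zag of isogenies, upgrade the triviality to an identity in $\End^0(A_x)$, and then extract an adjacent cancellation showing $\phi$ is itself trivial. Concretely, I would represent $\phi$ by a zig-zag
\[A_x = A_0 \xleftarrow{f_0} E_0\times E_0' \xrightarrow{g_0} A_1 \xleftarrow{f_1} \cdots \xrightarrow{g_m} A_{m+1} = A_x\]
as in the proof of Proposition \ref{image of the map}, and I may assume it is \emph{reduced} in the sense that for each $1\leq k\leq m$ the two $\alpha_p$-subgroups $H_k^L := \ker\bigl(g_{k-1}^\vee\colon A_k \to E_{k-1}\times E'_{k-1}\bigr)$ and $H_k^R := \ker\bigl(f_k^\vee\colon A_k \to E_k \times E'_k\bigr)$ obtained by dualizing via the principal polarizations are distinct; otherwise $E_{k-1}\times E'_{k-1} \cong A_k/H_k^L \cong E_k\times E'_k$ and the zig-zag admits an obvious shortening. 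In particular, each intermediate $A_k$ then contains $H_k^L + H_k^R \cong \alpha_p\times \alpha_p$, so it must be isomorphic to a product of two supersingular elliptic curves.

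Next, by Proposition \ref{image of the map} the image of $\phi$ in $J_b(\IQ_p)$ comes from a self-quasi-isogeny $\psi$ of $A_x$ preserving $\lambda_x$ and $\eta_x$, and the assumption that it is trivial means $\psi$ acts as the identity on $A_x[p^\infty]$. Since $\End^0(A_x)\hookrightarrow \End^0(A_x)\otimes_\IQ \IQ_p$ is injective, this forces $\psi = 1$ in $\End^0(A_x)$; the composed quasi-isogeny from the zig-zag is thus literally the identity of $A_x$.

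The core of the argument is an induction on $m$ showing that reducedness together with $\psi = 1$ forces $m = 0$, the base case being trivial. The key input is the observation stated just before the proposition: a finite subgroup scheme $K$ of a supersingular abelian variety with $K \not\supseteq \alpha_p\times \alpha_p$ has a unique degree-$p$ subgroup scheme. Clearing denominators in $\psi = 1$ produces a specific honest isogeny $A_x \to A_x$ factored through the zig-zag whose kernel $K \subseteq A_x$ has $p$-power order. Provided $K$ does not contain $\alpha_p\times \alpha_p$ --- which holds at least when $A_x$ is not itself superspecial --- the observation gives uniqueness of a degree-$p$ subgroup of $K$, allowing us to match the kernels at the two ends of the zig-zag and produce an isomorphism $E_0\times E_0' \cong E_m\times E_m'$ intertwining $f_0$ with $g_m$. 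This is exactly an adjacent cancellation at the joined endpoints $y_0 = y_{2m+1}$ of the loop, either contradicting reducedness or letting us drop the outermost pair and continue the induction on the shorter zig-zag $A_1 \leftarrow \cdots \to A_m$.

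The main obstacle is the uniformity of the control on $K$: the observation only applies as long as $K \not\supseteq \alpha_p\times \alpha_p$, and this property has to be propagated throughout the induction, even though each intermediate $A_k$ does contain such a subgroup scheme. A secondary difficulty is the case where $A_x$ is itself a product of two supersingular elliptic curves (i.e.~$x$ is superspecial), which weakens the rigidity used to rule out nontrivial subgroups. I would handle this either by a direct argument in that special case, or by applying Lemma \ref{independence of choice of basepoint} to move the basepoint to the open normal locus of $V_N$, where $A_x$ is not a product, before running the main argument.
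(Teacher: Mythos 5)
Your high-level strategy matches the paper's: represent an element of \(\pefg(V_N,x)\) mapping to the identity by a zig-zag of degree-\(p\) isogenies between supersingular abelian surfaces, and induct on the length by finding an adjacent cancellation, using the observation that a finite \(p\)-power subgroup of a supersingular abelian variety not containing \(\alpha_p\times\alpha_p\) has a unique degree-\(p\) subgroup scheme. Your remark that \(\End^0(A_x)\hookrightarrow\End^0(A_x[p^\infty])\) upgrades triviality in \(J_b(\IQ_p)\) to the composed quasi-isogeny being literally the identity of \(A_x\) is a clean way to pin this down, and the paper passes over it.

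However, the heart of the argument is not carried out, and, as you yourself flag, the gap is real. Your ``reduced'' normal form forces every intermediate \(A_k\) to contain two distinct \(\alpha_p\)'s and hence be superspecial, which makes it \emph{more} likely, not less, that the kernels of the composed isogenies through the zig-zag contain \(\alpha_p\times\alpha_p\); and the naive clearing of denominators in \(\psi=1\) produces \([p^n]\), whose kernel \(A_x[p^n]\) contains \(\alpha_p\times\alpha_p\) already for \(n\ge 1\). Your proposed fallback of moving the basepoint \(x\) into the normal locus (so that \(A_x\) is not superspecial) does not help, since the obstruction lives at the intermediate surfaces, not at \(A_x\). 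The paper resolves exactly this by building the full tower of (reduced) fibre products \(B_{j,i}\) over the zig-zag, with apex \(B_{m+2,0}\), and arguing by minimality: if every composed isogeny \(B_{m+2,0}\to A_i\) has kernel containing \(\alpha_p\times\alpha_p\), pick \(j\ge 1\) minimal so that some \(B_{m+2,0}\to B_{j,i}\) has kernel \emph{not} containing \(\alpha_p\times\alpha_p\); then the unique \(\alpha_p\times\alpha_p\subseteq B_{m+2,0}\) has nonzero image in \(B_{j,i}\) killed by both arrows \(B_{j,i}\to B_{j-1,i}\) and \(B_{j,i}\to B_{j-1,i+1}\), so those two arrows have the same kernel. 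A second descent of the same flavour then pushes this equality of kernels down to some \(B_{j,i}\) with \(j\in\{1,2\}\), which is what produces the adjacent cancellation in the original sequence. This two-stage minimality argument is precisely the ingredient your proposal is missing, and without it the induction does not close.
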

	\begin{proof}
		Consider a sequence of degree \(p\) isogenies between principally polarized abelian surfaces with level structure, of the form
		\begin{equation}\label{sequence we want to show is trivial}
			\begin{tikzcd}[row sep=tiny]
				& E_0\times E_0'\arrow[dr] \arrow[dl] && \ldots \arrow[dr] \arrow[dl] && E_m\times E_m' \arrow[dr] \arrow[dl]&\\
				A_x && A_1 && A_{m} && A_x
			\end{tikzcd}
		\end{equation}
		as in \eqref{quasi-isogeny obtained from path}. Let us assume that the composition \(A_x\to A_x\) is just the identity, in which case we need to show this sequence determines the trivial element of \(\pefg(V_N,x)\). Our goal will be to find a different, strictly shorter sequence determining the same element of \(\pefg(V_N,x)\), so that the proposition will follow by induction. Consider the commutative diagram
		\begin{equation}\label{big diagram of isogenies}
			\begin{tikzcd}[row sep=tiny, column sep=tiny]
				&&&&B_{m+2,0} \arrow[ld] \arrow[rd]&&&&\\
				&&&B_{m+1,0} && B_{m+1,1}&&&\\
				&&&\iddots&\vdots&\ddots&&&\\
				&&B_{2,0} \arrow[ld] \arrow[rd]&&&& B_{2,m-1} \arrow[ld] \arrow[rd]\\
				&B_{1,0} \arrow[ld] \arrow[rd] &&B_{1,1} \arrow[ld] \arrow[rd]&&B_{1,m-1} \arrow[ld] \arrow[rd] &&B_{1,m} \arrow[ld] \arrow[rd]\\
				A_x && A_1 && \ldots && A_{m} && A_x,
			\end{tikzcd}
		\end{equation}
		where we denote \(E_i\times E_i'\) by \(B_{1,i}\), and the \(B_{j,i}\) for \(j>1\) are defined as the reductions of the obvious fibre products. We will also denote \(A_i\) by \(B_{0,i}\) and \(A_x\) by \(B_{0,0}\), \(B_{0,m+1}\), or \(A_0\), depending on the situation. One can inductively show that the \(B_{j,i}\)'s are supersingular abelian surfaces: as reducedness implies smoothness and isogenies are proper, we only need to show they are connected. But this follows from \cite[Tag 0377]{stacks} and the fact that all our isogenies have degree \(1\) or \(p\). For the rest of this proof, we will use the term \emph{arrow} to denote a single isogeny of degree 1 or \(p\) in \eqref{big diagram of isogenies} or \eqref{sequence we want to show is trivial}, and use \emph{isogeny} for compositions of such arrows.
		We want to show that there is some \(B_{j,i}\), with \(j\in \{1,2\}\), such that the two arrows out of it have the same kernel. Indeed, in that case two consecutive arrows in \eqref{sequence we want to show is trivial} will be equal, so that removing them gives a strictly shorter sequence that determines the same element of \(\pefg(V_N,x)\).
		
		Let us start by showing that there exists some \(B_{j,i}\) with \(j\geq 1\), such that the two arrows out of it have the same kernel. This is clear if there is some arrow in \eqref{big diagram of isogenies} of degree 1: then one can take some \(B_{j,i}\), with \(j\geq 1\) minimal such that one arrow out of \(B_{j,i}\) has degree 1; minimality of \(j\) then implies both arrows out of \(B_{j,i}\) have degree 1. So we may assume all arrows have degree \(p\). First, assume there is a composition of arrows \(B_{m+2,0}\to A_i\) (where \(A_x=A_0\) is allowed) whose kernel does not contain a subgroup of the form \(\alpha_p\times\alpha_p\). Then the same holds for any composition which starts at the other arrow going out of \(B_{m+2,0}\), but where the composed isogeny is the same; we can always find at least one such composition by our assumption that the composed quasi-isogeny \(A_x\to A_x\) is the identity. But then the kernels of these composed isogenies \(B_{m+2,0}\to A_i\) contain only one subgroup of order \(p\), so that the arrows out of \(B_{m+2,0}\) have the same kernel. On the other hand, if the kernels of all isogenies \(B_{m+2,0}\to A_i\) in \eqref{big diagram of isogenies} contain a subgroup \(\alpha_p\times \alpha_p\), choose a composition \(B_{m+2,0}\to B_{j,i}\), with \(j\geq 1\) minimal such that the composed isogeny does not contain a subgroup of the form \(\alpha_p\times \alpha_p\). Then both \(B_{j-1,i}\) and \(B_{j-1,i+1}\) can be obtained by quotienting out the image of \(\alpha_p\times\alpha_p\subseteq B_{m+2,0}\) in \(B_{j,i}\), and in particular, the two arrows out of \(B_{j,i}\) have the same kernel. (Here, we used that abelian surfaces have at most one subgroup scheme isomorphic to \(\alpha_p\times \alpha_p\).)
		
		Now, consider some \(B_{j,i}\) as above, so that the two arrows out of it have the same kernel. As we want to have \(j=1\) or \(j=2\), let us assume that \(j>2\); again we may assume that all arrows with source \(B_{j',i'}\) for \(j'<j\) have degree \(p\), as otherwise we could already find a smaller \(j\) with the desired property. Note that \(B_{j-1,i}\) and \(B_{j-1,i+1}\) get canonically identified. If the two compositions \(B_{j-1,i}\to B_{j-3,i+1}\) do not have kernel isomorphic to \(\alpha_p\times \alpha_p\), then the kernels of the arrows out of \(B_{j-1,i}\) agree. A similar statement holds if we replace \(B_{j-1,i}\) by \(B_{j-1,i+1}\). On the other hand, if the isogenies \(B_{j-1,i}\to B_{j-3,i+1}\) and \(B_{j-1,i+1}\to B_{j-3,i+2}\) both have kernel isomorphic to \(\alpha_p\times \alpha_p\), then both \(B_{j-3,i+1}\) and \(B_{j-3,i+2}\) can be obtained by quotienting out the image of (the unique subgroup scheme) \(\alpha_p\times \alpha_p\subseteq B_{j-1,i}=B_{j-1,i+1}\) in \(B_{j-2,i+1}\). In particular, the arrows out of \(B_{j-2,i+1}\) have the same kernel. Continuing in this fashion, we can find \(j\in \{1,2\}\) and a \(B_{j,i}\) with this property, concluding the proof.
	\end{proof}
	
	Let us end this paper with two immediate corollaries to Propositions \ref{image of the map} and \ref{injectivity of the map}:
	
	\begin{corollary}
		Let \(A\) be a supersingular abelian surface over \(\overline{\IF_p}\), equipped with a principal polarization and level-\(N\) structure. Then the group of self-quasi-isogenies of \(A\) respecting this extra structure is a free group on a finite number of generators.
	\end{corollary}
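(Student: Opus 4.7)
The plan is to realize $(A,\lambda,\eta)$ itself as the geometric basepoint $x$ of $V_N$, and then read off the corollary from Propositions \ref{image of the map} and \ref{injectivity of the map}, combined with Remark \ref{pefg of ss locus}. Since $V_N$ is the supersingular locus of $\Aa_{2,1,N}$, the point of $\Aa_{2,1,N}$ classifying $(A,\lambda,\eta)$ lies in $V_N$, and we may take it as our basepoint $x$, so that $A_x = A$, $\lambda_x = \lambda$ and $\eta_x = \eta$.

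By Proposition \ref{image of the map}, the image of $\pefg(V_N,x)\to J_b(\IQ_p)$ coincides with the image of the natural map
\[
\Phi\colon \{\text{self-quasi-isogenies of $A$ respecting $\lambda$ and $\eta$}\}\longrightarrow J_b(\IQ_p)
\]
sending a self-quasi-isogeny to the induced quasi-isogeny on $A[p^\infty]$. The next step is to check that $\Phi$ is injective: any element in its kernel is a self-quasi-isogeny of $A$ preserving $\lambda$ and $\eta$ which is trivial on $A[p^\infty]$, hence is in particular prime to $p$, and so must be trivial by the observation recalled just before Proposition \ref{image of the map}. Combining this with the injectivity of $\pefg(V_N,x)\to J_b(\IQ_p)$ proved in Proposition \ref{injectivity of the map}, $\Phi$ identifies the group of self-quasi-isogenies of $A$ respecting $\lambda$ and $\eta$ with $\pefg(V_N,x)$ as an abstract group.

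The corollary then follows directly from Remark \ref{pefg of ss locus}, which uses Theorem \ref{pefg of curves over separably closed field} together with the Moret-Bailly families and the work of Katsura-Oort to establish that $\pefg(V_N,x)$ is a discrete free group on finitely many generators. Accordingly, no serious obstacle arises: once Propositions \ref{image of the map} and \ref{injectivity of the map} are in hand, the only additional input is the injectivity of $\Phi$, which is immediate from the quoted triviality of prime-to-$p$ quasi-isogenies preserving the full PEL structure.
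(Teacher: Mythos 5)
Your proof is correct and spells out exactly the argument the paper intends when it calls this an ``immediate corollary'' of Propositions \ref{image of the map} and \ref{injectivity of the map}: you identify the target group with the common image $\Gamma$ of both $\pefg(V_N,x)\to J_b(\IQ_p)$ and the map from self-quasi-isogenies of $(A,\lambda,\eta)$, using the injectivity of each, and then invoke Remark \ref{pefg of ss locus} to see that $\pefg(V_N,x)$ is a finitely generated free group.
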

	
	Our second corollary is concerned with Rapoport-Zink spaces as defined in \cite{rapoportzink}. Since we do not use these spaces in the rest of this paper, we omit their definition, and refer to loc.~cit.~for the theory behind them. Let us just mention that they are formal schemes over \(\spf \Breve{\IZ}_p = \spf W(\overline{\IF_p})\), but by topological invariance, we can and do consider their reduced subschemes instead, which live over \(\spec \overline{\IF_p}\). Moreover, recall from Remark \ref{our results agree with CS} that by \(p\)-adic uniformization, \(V_N\) is canonically a quotient \(\Gamma\backslash \Mm_b\), where \(\Mm_b\) is the Rapoport-Zink space corresponding to \(V_N\), and \(\Gamma\) is the image of \(\pefg(V_N,x)\to J_b(\IQ_p)\), as before.
	
	\begin{corollary}
		The Rapoport-Zink space \(\Mm_b\) corresponding to the basic stratum of the Siegel threefold is simply connected.
	\end{corollary}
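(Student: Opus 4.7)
The plan is to exhibit each connected component of the underlying reduced scheme of \(\Mm_b\) as the universal connected geometric cover of \(V_N\). By Remark~\ref{our results agree with CS} and \(p\)-adic uniformization \cite[Theorem~6.30]{rapoportzink}, the formal completion of \(\Aa_{2,1,N}\) along \(V_N\) is identified with \(\Gamma \backslash \Mm_b\), where \(\Gamma \subseteq J_b(\IQ_p)\) is the image of the homomorphism \(\pefg(V_N,x) \to J_b(\IQ_p)\) classifying \(\Pp_N\). Passing to reduced subschemes, which is harmless by Proposition~\ref{topological invariance}, this realizes \(\Mm_b \to V_N\) as a pro-étale \(\Gamma\)-torsor; moreover, pushing out along \(\Gamma \hookrightarrow J_b(\IQ_p)\) recovers \(\Pp_N\), since by construction both torsors parametrize the same quasi-isogeny data above each geometric point of \(V_N\).

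Next, fix a connected component \(\Mm_b^{\circ} \subseteq \Mm_b\) with stabilizer \(\Gamma' \subseteq \Gamma\). Then \(\Mm_b^{\circ} \to V_N\) is a connected pro-étale \(\Gamma'\)-torsor, so Theorem~\ref{pefg classifies torsors} classifies it by a continuous surjection
\[
\rho \colon \pefg(V_N,x) \twoheadrightarrow \Gamma',
\]
where surjectivity is forced by connectedness of the total space. The composition \(\pefg(V_N,x) \xrightarrow{\rho} \Gamma' \hookrightarrow J_b(\IQ_p)\) is, up to an inner automorphism of \(J_b(\IQ_p)\), precisely the classifying map of \(\Pp_N\) to which Proposition~\ref{injectivity of the map} applies. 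Hence this composition is injective, so \(\rho\) itself is injective, and therefore an isomorphism of Noohi groups.

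Under the equivalence \(\Loc_{V_N} \cong \pefg(V_N,x)\set\), the cover \(\Mm_b^{\circ}\) now corresponds to \(\Gamma'\) with \(\pefg(V_N,x)\) acting via \(\rho\) by translation, i.e., the left-regular \(\pefg(V_N,x)\)-set. Thus \(\Mm_b^{\circ}\) is the universal connected geometric cover of \(V_N\), and \(\pefg(\Mm_b^{\circ}) = 1\). Since this holds for every connected component, \(\Mm_b\) is simply connected. The one point that demands care beyond applying earlier results is the compatibility between the \(\Gamma\)-torsor structure on \(\Mm_b\) coming from \(p\)-adic uniformization and the \(J_b(\IQ_p)\)-torsor \(\Pp_N\) of \cite[Proposition~4.3.13]{generic}; once this identification is verified, the corollary is a direct translation of the injectivity statement in Proposition~\ref{injectivity of the map} into a statement about covers.
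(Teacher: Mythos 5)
Your proof is correct and follows the same strategy as the paper: the decisive input is Proposition~\ref{injectivity of the map}, and \(\Mm_b\) is simply connected precisely because it is the geometric cover of \(V_N\) classified by the injective map \(\pefg(V_N,x)\to\Gamma\). The paper compresses the \(p\)-adic uniformization step and the comparison with \(\Pp_N\) into the phrase ``essentially by definition,'' while you unpack this via Theorem~\ref{pefg classifies torsors}, the \(\Gamma\)-torsor structure, and a connected-component analysis; this is a sound expansion of what the paper leaves implicit rather than a genuinely different route.
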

	\begin{proof}
		Essentially by definition of both \(\Mm_b\) and \(\Pp_N\), \(\Mm_b\) is the smallest geometric cover of \(V_N\) trivializing \(\Pp_N\). Hence, as in topology, \(\Mm_b\) being simply connected is equivalent to injectivity of the natural map \(\pefg(V_N,x)\to \Gamma\). We conclude by Proposition \ref{injectivity of the map}.
	\end{proof}

\end{document}